\documentclass[11pt,reqno]{amsart}

\usepackage{amsmath,amssymb,amsfonts,amsthm}
\usepackage{accents}
\usepackage{enumerate,enumitem}
\usepackage[top=30truemm,bottom=30truemm,left=30truemm,right=30truemm]{geometry}
\usepackage[breaklinks=true,colorlinks=true,linkcolor=blue,citecolor=blue,filecolor=blue,urlcolor=blue]{hyperref}
\hypersetup{
pdftitle={Order of Zeros of Dedekind Zeta Functions},
pdfauthor={Daniel Hu, Ikuya Kaneko, Spencer Martin, and Carl Schildkraut},
linktocpage}
\usepackage{mathrsfs}
\usepackage{mathtools}
\usepackage{etoolbox}
\usepackage{todonotes}
\usepackage[capitalise]{cleveref}

\newtheorem{theorem}{Theorem}[section]
\newtheorem{lemma}[theorem]{Lemma}
\newtheorem{corollary}[theorem]{Corollary}

\newtheorem{conjecture}[theorem]{Conjecture}
\theoremstyle{definition}
\newtheorem{definition}[theorem]{Definition}
\newtheorem{remark}[theorem]{Remark}

\newcommand{\C}{\mathbb{C}}
\newcommand{\F}{\mathbb{F}}

\newcommand{\Q}{\mathbb{Q}}

\newcommand{\Z}{\mathbb{Z}}
\newcommand{\oO}{\mathcal O}
\newcommand{\idp}{\mathfrak p}
\DeclareMathOperator{\Ind}{Ind}
\DeclareMathOperator{\Gal}{Gal}
\DeclareMathOperator*{\ord}{ord}
\DeclareMathOperator{\frob}{Frob}
\DeclareMathOperator{\AGL}{AGL}
\DeclareMathOperator{\PSL}{PSL}
\DeclareMathOperator{\SL}{SL}
\DeclareMathOperator{\SUT}{SUT}
\DeclareMathOperator{\PSUT}{PSUT}
\DeclareMathOperator{\Sz}{Sz}

\begin{document}

\title{Order of Zeros of Dedekind Zeta Functions}

\author{Daniel Hu}
\address{Department of Mathematics, Princeton University, Fine Hall, Washington Road, Princeton NJ 08544-1000, USA}
\email{danielhu@princeton.edu}

\author{Ikuya Kaneko}
\address{Department of Mathematics, California Institute of Technology, 1200 E California Blvd, Pasadena, CA 91125, USA}
\email{ikuyak@icloud.com}
\urladdr{\href{https://sites.google.com/view/ikuyakaneko/}{https://sites.google.com/view/ikuyakaneko/}}

\author{Spencer Martin}
\address{Department of Mathematics, University of Virginia, 141 Cabell Drive, Kerchof Hall, Charlottesville, VA 22904, USA}
\email{sm5ve@virginia.edu}

\author{Carl Schildkraut}
\address{Department of Mathematics, Massachusetts Institute of Technology, 77 Massachusetts Avenue, Cambridge, MA 02139-4307, USA}
\email{carlsc@mit.edu}

\subjclass[2020]{Primary: 11R42; Secondary: 20C15}

\keywords{}

\date{\today}

\begin{abstract}
Answering a question of Browkin, we provide a new unconditional proof that~the Dedekind zeta function of a number field $L$ has infinitely many nontrivial zeros of multiplicity at least 2 if $L$ has a subfield $K$ for which $L/K$ is a nonabelian Galois extension. We~also~extend this to zeros of order 3 when $\Gal(L/K)$ has an irreducible representation of degree at least~3, as predicted by the Artin holomorphy conjecture.
\end{abstract}

\maketitle

\section{Introduction}\label{introduction}
The study of the order of zeros of $L$-functions is one of the central problems in number~theory. It is conjectured that all nontrivial zeros of the Riemann zeta function lie on the critical line $\Re(s) = \frac{1}{2}$ and are simple. On the other hand, there exist number fields $L$ such that the Dedekind zeta function $\zeta_L(s)$ has nontrivial zeros of higher order. This is due to the decomposition of Dedekind zeta functions as a product of Artin $L$-functions. The Artin holomorphy conjecture predicts that Artin $L$-functions associated to nontrivial irreducible representations are entire. Assuming this conjecture, if $L/K$ is a nonabelian Galois extension of number~fields, then $\zeta_L(s)$ has infinitely many nontrivial zeros of higher order.

If $L/\Q$ is Galois and we further assume that no two Artin $L$-functions associated to irreducible characters of $\Gal(L/\Q)$ share nontrivial zeros (with the possible exception of $s = \frac{1}{2}$), as well as that all such zeros are simple, then one can be more precise: the highest order~nontrivial zeros of $\zeta_L(s)$ away from $s = \frac{1}{2}$ have order equal to the greatest degree of any~irreducible representation of $\Gal(L/\Q)$ and there are infinitely many such zeros.

Since the Artin holomorphy conjecture is known for specific Galois groups, it has long~been known that there are infinitely many~cases of zeros of higher order away from $s = \frac{1}{2}$. Browkin \cite{browkin-2013} studied one such family of Galois groups. His example concerns the affine group over~a finite field, namely the matrix group
\begin{equation}\label{eq:aff-gp-def}
    \AGL_1(\mathbb{F}_q) \coloneqq \bigg\{
    \begin{pmatrix}
        a & b \\ 0 & 1
    \end{pmatrix} \bigg|\,\, a \in \mathbb{F}_q^\times, b \in \mathbb{F}_q \bigg\}.
\end{equation}
Each such group possesses only one irreducible character of degree greater than one (which has degree $q-1$) induced by one-dimensional characters on a subgroup of~$\AGL_1(\F_q)$. As~a result (see~\cref{cor:hol-artin-induct}), the Artin holomorphy conjecture is known for the Artin $L$-function corresponding to this character. Hence, for every Galois extension $L/K$ of number fields~with Galois group~$\AGL_1(\F_q)$, the Dedekind zeta function $\zeta_{L}(s)$ has infinitely many zeros of~multiplicity at least $q - 1$ in the critical strip~$0 < \Re(s) < 1$. This led Browkin to ask if $\zeta_L(s)$ always has higher order nontrivial zeros whenever $L/K$ is nonabelian \cite[Section 7]{browkin-2013}.

An alternative, more general approach is to study the holomorphy of a family of Artin $L$-functions at a given point. This is the approach taken by Stark \cite[Theorem 3]{stark1974some} who~showed that if $L/K$ is Galois and $\rho$ is a simple zero of $\zeta_L(s)$, then $L(s, \chi, L/K)$ is holomorphic at~$s = \rho$ for every character $\chi$. Stark's result has been refined by Foote and Kumar Murty \cite{foote_murty_1989} and by~Foote and Wales \cite{footeorder2} to treat higher order zeros when $L/K$ is a solvable extension. As a corollary of the work of Stark, we obtain the following theorem.

\begin{theorem}\label{thm:mainthm}
If $L/K$ is a nonabelian Galois extension of number fields, the Dedekind zeta function $\zeta_{L}(s)$ has~infinitely many nontrivial zeros with multiplicity greater than $1$.
\end{theorem}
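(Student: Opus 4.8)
The plan is to argue by contradiction, extracting from Stark's theorem \cite{stark1974some} the structural fact that \emph{every} simple nontrivial zero of $\zeta_L$ is forced to be a zero of the Dedekind zeta function of the maximal abelian subextension of $L/K$, and then to derive a contradiction by comparing zero-counting asymptotics. The crux is the passage from Stark's pointwise holomorphy statement to this structural consequence; the analytic inputs are standard.

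First I would fix the relevant subfield and factorization. Write $G = \Gal(L/K)$, which is nonabelian, so its commutator subgroup $[G,G]$ is nontrivial; let $M = L^{[G,G]}$ be the corresponding fixed field, the maximal subextension of $L/K$ abelian over $K$. Since $[G,G] \neq 1$ we have $M \subsetneq L$, and hence $n_L := [L:\Q] > [M:\Q] =: n_M$. Grouping the standard factorization of $\zeta_L$ into Artin $L$-functions over the irreducible characters $\chi$ of $G$ according to their degree $\chi(1)$, and noting that the degree-one characters are exactly those inflated from $\Gal(M/K)$ (whose $L$-functions multiply to $\zeta_M$), gives
\[
    \zeta_L(s) = \zeta_M(s) \prod_{\chi(1) \geq 2} L(s,\chi,L/K)^{\chi(1)}.
\]

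Next comes the key step. Suppose, for contradiction, that $\zeta_L$ has only finitely many nontrivial zeros of multiplicity greater than $1$; then all but finitely many nontrivial zeros are simple. Let $\rho$ be such a simple zero. By Stark's theorem every $L(s,\chi,L/K)$ is holomorphic at $\rho$, so each local order $\ord_{s=\rho} L(s,\chi,L/K)$ is a nonnegative integer, and summing over all irreducible $\chi$,
\[
    1 = \ord_{s=\rho}\zeta_L(s) = \sum_{\chi} \chi(1)\,\ord_{s=\rho} L(s,\chi,L/K).
\]
Since every $\chi(1) \geq 1$, exactly one summand equals $1$ and the rest vanish; as a summand with $\chi(1) \geq 2$ would contribute at least $2$, the nonzero contribution must come from a degree-one character. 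Consequently $\ord_{s=\rho}\zeta_M(s) = 1$, so $\rho$ is a (simple) nontrivial zero of $\zeta_M$. Thus every simple nontrivial zero of $\zeta_L$ is a nontrivial zero of $\zeta_M$.

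Finally I would close with a counting argument, which is where the bookkeeping must be handled with care. Writing $N_F(T)$ for the number of nontrivial zeros of $\zeta_F$ up to height $T$, the Riemann--von Mangoldt formula gives $N_L(T) \sim \frac{n_L}{2\pi} T\log T$ and $N_M(T) \sim \frac{n_M}{2\pi} T\log T$. Under the contradiction hypothesis the simple zeros account for all but $O(1)$ of $N_L(T)$, while by the previous paragraph they inject (via the identity map on $\C$) into the set of distinct nontrivial zeros of $\zeta_M$, whose number up to height $T$ is at most $N_M(T)$. This forces $n_L \leq n_M$, contradicting $n_L > n_M$. The main obstacle is precisely the middle step --- using Stark's theorem to rule out that a simple zero of $\zeta_L$ arises from any higher-degree Artin $L$-function, so that it is already visible in $\zeta_M$ --- together with ensuring that this injection respects the critical-strip (``nontrivial'') accounting so that the two asymptotics are genuinely comparable.
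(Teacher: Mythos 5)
Your argument is correct, and it shares its key input --- Stark's theorem (\cref{thm:stark}) applied at a simple zero, combined with the factorization of \cref{cor:dedekind-zeta-factorization} --- with the paper's first proof, but the endgame is genuinely different. The paper fixes one irreducible $\chi$ with $\chi(1)\geq 2$, takes any of the infinitely many zeros or poles $\rho$ of $L(s,\chi,L/K)$ in the critical strip, and shows directly via Stark that $\ord_{s=\rho}\zeta_L(s)\geq\chi(1)\geq 2$; the cost is that it must invoke the (standard but unproved-there) fact that a higher-degree Artin $L$-function has infinitely many zeros or poles in the critical strip. You instead assume finitely many multiple zeros, use Stark plus the degree bookkeeping to show every simple nontrivial zero of $\zeta_L$ is already a zero of $\zeta_M$ for $M=L^{[G,G]}\subsetneq L$, and contradict the Riemann--von Mangoldt asymptotics $N_L(T)\sim\frac{n_L}{\pi}T\log T$ versus $N_M(T)\sim\frac{n_M}{\pi}T\log T$ with $n_L>n_M$. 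This trades the zero-density input for individual Artin $L$-functions for the classical, unconditional zero-counting formula for Dedekind zeta functions, which is arguably more self-contained; on the other hand it is purely a proof by contradiction and, unlike the paper's argument, does not exhibit the multiple zeros as lying at the zeros of a specific Artin $L$-function. One small point worth making explicit: the identity $\prod_{\chi(1)=1}L(s,\chi,L/K)=\zeta_M(s)$ follows from the paper's \cref{lem:artin-L}(b) and (d) applied to $\Ind_{[G,G]}^{G}1=\sum_{\chi(1)=1}\chi$, so you do not need to appeal separately to inflation invariance. Note also that the paper gives a second, entirely group-theoretic proof (minimal counterexample plus monomial groups) that your approach does not resemble.
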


The goal of this paper is to prove this result by wholly different means: we non-constructively consider the class of Galois groups for~which the conclusion of \cref{thm:mainthm} holds~and establish that this encompasses all nonabelian finite groups.

Although the example of Browkin allows for explicit lower bounds for the multiplicities~of the nontrivial zeros of interest, neither proof of \cref{thm:mainthm} is able to prescribe such multiplicities predicted by the Artin holomorphy conjecture. However, we can match this prediction~for zeros of order 3.

\begin{theorem}\label{thm:degree3}
Let $L/K$ be a Galois extension of number fields. If $\Gal(L/K)$ has an irreducible representation of degree at least 3, then the Dedekind zeta function $\zeta_{L}(s)$ has~infinitely many nontrivial zeros with multiplicity greater than $2$.
\end{theorem}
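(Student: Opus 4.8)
The plan is to translate orders of vanishing into representation theory via the Heilbronn character and then run a zero-counting argument parallel to the one behind \cref{thm:mainthm}. Write $G=\Gal(L/K)$, and for a nontrivial zero $\rho$ of $\zeta_L$ set $n_\chi(\rho)=\ord_{s=\rho}L(s,\chi,L/K)$ for each irreducible character $\chi$ of $G$ and $\theta_\rho=\sum_\chi n_\chi(\rho)\chi$; the factorization $\zeta_L=\prod_\chi L(s,\chi,L/K)^{\chi(1)}$ gives $\theta_\rho(1)=\ord_{s=\rho}\zeta_L$. By \cref{cor:hol-artin-induct}, for every subgroup $H\le G$ and every degree-one character $\lambda$ of $H$ the function $L(s,\Ind_H^G\lambda,L/K)$ is a Hecke $L$-function over $L^H$, hence holomorphic at any $\rho\ne 1$, so Frobenius reciprocity yields the positivity
\[ \langle \mathrm{Res}_H\theta_\rho,\lambda\rangle_H=\langle\theta_\rho,\Ind_H^G\lambda\rangle_G\ge 0 \]
for all such $H$ and $\lambda$. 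Taking $H$ cyclic shows $\mathrm{Res}_{\langle g\rangle}\theta_\rho$ is an honest character for every $g$, whence $|\theta_\rho(g)|\le\theta_\rho(1)$.

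Suppose, for contradiction, that only finitely many nontrivial zeros have order at least $3$. Then all but finitely many satisfy $\theta_\rho(1)\le 2$, so $|\theta_\rho(g)|\le 2$ for all $g$ and $\langle\theta_\rho,\theta_\rho\rangle\le 4$; thus these $\theta_\rho$ lie in a finite set $\mathcal S$ of virtual characters. For each irreducible $\chi$ the Riemann--von Mangoldt count gives $\sum_{|\Im\rho|\le T}n_\chi(\rho)\sim\frac{\chi(1)[K:\Q]}{2\pi}T\log T$, since $L(s,\chi,L/K)$ is meromorphic of finite order with degree $\chi(1)[K:\Q]$. Writing this count as $\sum_{\theta\in\mathcal S}\langle\theta,\chi\rangle\,Z_\theta(T)+O(1)$ with $Z_\theta(T)\ge 0$ the number of relevant $\rho$ with $\theta_\rho=\theta$, dividing by $T\log T$, and extracting subsequential densities $w_\theta\ge 0$, one obtains the identity $\sum_{\theta\in\mathcal S}w_\theta\,\theta=\frac{[K:\Q]}{2\pi}\,\chi_{\mathrm{reg}}$ of real virtual characters, where $\chi_{\mathrm{reg}}=\sum_\chi\chi(1)\chi$ is the regular character. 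Pairing with an irreducible $\chi_0$ of degree at least $3$ forces $\sum_{\theta}w_\theta\langle\theta,\chi_0\rangle=\frac{\chi_0(1)[K:\Q]}{2\pi}>0$, so some $\theta\in\mathcal S$ satisfies $\langle\theta,\chi_0\rangle>0$.

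It therefore suffices to prove the purely representation-theoretic statement: \emph{if $\theta$ is a virtual character of $G$ with $\theta(1)\le 2$ satisfying the positivity above, then $\langle\theta,\chi_0\rangle\le 0$ for every irreducible $\chi_0$ with $\chi_0(1)\ge 3$.} The cases $\theta(1)\in\{0,1\}$ encode exactly Stark's input used for \cref{thm:mainthm}: from $|\theta(g)|\le\theta(1)$ we get $\langle\theta,\theta\rangle\le\theta(1)^2\le 1$, so $\theta$ is either $0$ or a single linear character, which is orthogonal to $\chi_0$. I also expect the claim to be immediate when $G$ is an $M$-group, for then the induced characters $\Ind_H^G\lambda$ generate the cone of effective characters, the positivity forces $\theta$ to be effective, and an effective $\theta$ of degree at most $2$ cannot contain $\chi_0$; in particular this covers (and slightly sharpens the perspective of) the solvable refinements of Foote--Kumar Murty and Foote--Wales.

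The main obstacle is thus the case $\theta(1)=2$ for non-$M$-groups, where degree-one positivity alone does not force $\theta$ to be effective and one must exploit the restriction inequalities at \emph{all} subgroups. I would attack this by induction on $|G|$, mirroring the ``class of groups'' strategy of \cref{thm:mainthm}: first reduce to the case where $\chi_0$ is faithful, by deflating along $N=\ker\chi_0$ to replace $(\theta,G)$ with a virtual character of $G/N$ of degree at most $2$ satisfying the same positivity and having $\langle\theta,\chi_0\rangle=\langle\theta^N,\overline{\chi_0}\rangle$; then, since $\mathrm{Res}_H\theta$ satisfies the positivity inside each proper $H<G$ and each proper section, the inductive hypothesis makes these restrictions effective away from degree-one constituents, which pins down $\theta$ tightly. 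This isolates the problem to the minimal configurations: finite groups carrying a faithful irreducible of degree at least $3$ all of whose proper sections have irreducibles only of degree at most $2$. The structure of such groups is severely constrained by the classification of groups whose irreducible characters all have degree at most $2$, and I expect these finitely many structural possibilities to be dispatched by a direct computation showing that no degree-$2$ virtual character meeting the positivity can pair positively with the faithful degree-$\ge 3$ character, completing the contradiction.
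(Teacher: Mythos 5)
Your reduction is set up correctly: the positivity $\langle\theta_\rho,\Ind_H^G\lambda\rangle\ge 0$ does follow from \cref{lem:artin-L} and Frobenius reciprocity, the bound $|\theta_\rho(g)|\le\theta_\rho(1)$ is the standard Heilbronn--Stark inequality, and the zero-density bookkeeping (finitely many possible $\theta_\rho$ of bounded norm, Riemann--von Mangoldt for Artin $L$-functions via Brauer induction, subsequential weights summing to a multiple of $r_G$) is a legitimate way to force some admissible $\theta$ with $\theta(1)\le 2$ to pair positively with an irreducible $\chi_0$ of degree $\ge 3$. But the argument then stops exactly where the real difficulty begins. The representation-theoretic claim you need --- that a virtual character $\theta$ with $\theta(1)\le 2$ satisfying the positivity at all $(H,\lambda)$ cannot contain an irreducible of degree $\ge 3$ with positive multiplicity --- \emph{is} the order-two extension of Stark's theorem. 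It is exactly the content of the Foote--Wales result (\cref{FooteWales}), which is available only for solvable $G$; the paper states explicitly that the non-solvable case ``does not appear to be easily addressed by Stark-like theorems,'' and that is the case your proof must handle, since for solvable $G$ the theorem already follows from \cref{FooteWales} by the argument of \cref{sec:stark}. Your sketch for this case is not a proof: the induction's key steps are unjustified. Deflation along $N=\ker\chi_0$ does not obviously preserve $\theta(1)\le 2$ (the non-deflated part of $\theta$ could have negative degree; this can be repaired via Aramata--Brauer, but you do not do so), and the inductive hypothesis applied to a proper subgroup $H$ only gives $\langle\mathrm{Res}_H\theta,\psi\rangle\le 0$ for irreducibles $\psi$ of $H$ of degree $\ge 3$ --- it does not make $\mathrm{Res}_H\theta$ effective, so the claimed reduction to ``finitely many minimal configurations'' governed by Amitsur's classification does not follow. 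The final step is an explicit ``I expect,'' and for the groups that actually arise (e.g.\ sections related to $\PSL_2(q)$, where restrictions such as $\chi_5-\chi_3$ on $A_5$ require checking positivity at every subgroup) this is precisely the hard computation that has not been done.

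For contrast, the paper avoids this entirely in the non-solvable case: a minimal counterexample is shown to be a minimal simple group, Thompson's classification (\cref{lem:classification-of-minimal-simple-groups}) lists the possibilities, and each is handled by exhibiting an explicit monomial subgroup ($\AGL_1(q)$ or $\AGL_1'(q)$, via \cref{lem:group-theoretic-lemmas} and \cref{lem:semidirect-computation}) carrying an irreducible character of degree $\ge 3$, for which Artin holomorphy is known unconditionally; the solvable case is handled either by \cref{FooteWales} or by Amitsur's classification plus Clifford theory. If you want to pursue your route, you would need to either prove the $\theta(1)=2$ positivity statement for the relevant non-solvable groups (essentially redoing Foote--Wales beyond the solvable case) or graft the paper's reduction to minimal simple groups onto your density framework --- at which point the density argument becomes unnecessary.
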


The solvable case of \cref{thm:degree3} can be proved quite easily by the results of Foote and Wales \cite{footeorder2}, though the non-solvable case does not appear to be easily addressed by Stark-like theorems. We elaborate on this in \cref{sec:degree-3-proof}.

In forthcoming work \cite{hkms}, we apply \cref{thm:mainthm} to establish that an analogue of the Mertens conjecture fails for certain number fields. This is our main~motivation to work with the order of zeros of Dedekind zeta functions.

\subsection*{Acknowledgements}
We are deeply grateful to Peter Humphries for supervising this project and to Ken Ono for his valuable suggestions. We would also like to thank Robert Lemke Oliver and Samit Dasgupta for helpfully directing us to the work of Stark. Finally, we are grateful for the generous support of the National Science Foundation (Grants DMS 2002265 and DMS 205118),
National Security Agency (Grant H98230-21-1-0059), the Thomas Jefferson Fund at the University of Virginia, and the Templeton~World Charity Foundation.

\section{Preliminaries}\label{preliminaries}

First of all, we recall the definition of an Artin $L$-function and some of its properties. These can be found in \cite[pp.~211,~220--222]{heilbronn1967zeta}.

\begin{definition}\label{def:artin-L} Given a Galois extension $L/K$ of number fields and a (complex linear)~representation $(\rho,V)$ of $\operatorname{Gal}(L/K)$ with character $\chi$, the \emph{Artin $L$-function} $L(s,\chi,L/K)$ is defined as a product of local factors, one for each prime ideal $\idp\subset\oO_K$. For an unramified prime $\idp$,~the factor is
\begin{equation*}
\det\left(I-N(\idp)^{-s}\rho(\frob(\idp))\right)^{-1},
\end{equation*}
where $\frob(\idp)$ is the Frobenius element of $\idp$ defined up to conjugacy in $\operatorname{Gal}(L/K)$. For~ramified $\idp$, the matrix is restricted to the subspace of $V$ fixed by the inertia group of $\idp$. As $\frob(\idp)$ is only defined up to an element of the inertia group, the restriction and corresponding determinant are only well-defined on this subspace. 
\end{definition}

\begin{lemma}\label{lem:artin-L}
Let $L/K$ be a Galois extension of number fields with Galois group $G$.
\begin{enumerate}[label=(\roman*), font=\normalfont]
\item[(a)] If $\chi$ is a one-dimensional character of $G$, then $L(s,\chi,L/K)$ is a Hecke $L$-function~and thus holomorphic on the whole complex plane $\C$, unless $\chi$ is the trivial character of $G$ in which case it is holomorphic except for a pole at $s = 1$.
\item[(b)] If $\chi$ is a character of some subgroup $H \subset G$, then $L(s, \Ind_{H}^{G} \chi, L/K) = L(s, \chi, L/L^{H})$.
\item[(c)] If $1$ is the trivial character of $G$, then $L(s, 1, L/K) = \zeta_{K}(s)$, and if $r_{G}$ is the character corresponding to the regular representation of $G$, then $L(s, r_{G}, L/K) = \zeta_{L}(s)$.
\item[(d)] If $\chi_{1}$ and $\chi_{2}$ are characters of $G$, then $L(s, \chi_{1}+\chi_{2}, L/K) = L(s, \chi_{1}, L/K) L(s, \chi_{2}, L/K)$.
\end{enumerate}
\end{lemma}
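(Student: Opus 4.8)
The plan is to treat the four assertions in increasing order of depth: (d) and the first half of (c) are purely formal manipulations of Euler products, the regular-representation identity reduces to (b), and the genuine analytic content is concentrated entirely in part (a). I would organize the proof so that the one nonformal ingredient is isolated and everything else is reduced to it or to direct local computations.

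First I would dispatch (d) by linear algebra. If $(\rho_1, V_1)$ and $(\rho_2, V_2)$ realize $\chi_1$ and $\chi_2$, then $\rho_1 \oplus \rho_2$ realizes $\chi_1 + \chi_2$, and for each prime $\idp$ the relevant matrix acts block-diagonally on the inertia-fixed subspace of $V_1 \oplus V_2$; since $\det$ is multiplicative on block-diagonal matrices, each local factor of $L(s, \chi_1 + \chi_2, L/K)$ factors as the product of the corresponding local factors for $\chi_1$ and $\chi_2$, and taking the product over all $\idp$ gives the claim. The first half of (c) is equally immediate: for the trivial character the local factor at every prime (the inertia group fixes the whole one-dimensional space) is $(1 - N(\idp)^{-s})^{-1}$, so the Euler product is exactly $\zeta_K(s)$.

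The heart of part (a) is class field theory. A one-dimensional character $\chi$ of $G$ factors through the abelianization $G^{\mathrm{ab}}$, and via the Artin reciprocity map this corresponds to a character of a ray class group of $K$, that is, to a Hecke (idele class) character $\psi$. Matching Euler factors prime by prime---the Artin local factor $(1 - \chi(\frob(\idp)) N(\idp)^{-s})^{-1}$ against the Hecke local factor---identifies $L(s, \chi, L/K)$ with the Hecke $L$-function $L(s, \psi)$. I would then invoke Hecke's theorem that $L(s, \psi)$ extends to an entire function when $\psi$ is nontrivial and to $\zeta_K(s)$, with its simple pole at $s = 1$, when $\psi$ is trivial. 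This identification, together with the analytic continuation of Hecke $L$-functions, is the only nonformal step and is the main obstacle; everything else is bookkeeping.

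For part (b) I would compare Euler factors at each prime $\idp$ of $K$ using the structure of induced representations. Writing $\sigma = \frob(\idp)$, the restriction of $\Ind_H^G \rho$ to the decomposition data at $\idp$ breaks up according to the double cosets $H \backslash G / \langle \sigma \rangle$ via Mackey's formula, and these double cosets are in bijection with the primes $\mathfrak{P}$ of $L^H$ lying above $\idp$, with the residue degree and ramification recorded by each coset. A direct computation then shows that the local factor of $\Ind_H^G \chi$ at $\idp$ equals the product over such $\mathfrak{P}$ of the local factors of $\chi$ for the extension $L/L^H$; the ramified primes require the same comparison performed on inertia-fixed subspaces. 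Taking the product over $\idp$ yields $L(s, \Ind_H^G \chi, L/K) = L(s, \chi, L/L^H)$. Finally, the regular-representation identity in (c) follows formally: since $r_G = \Ind_{\{1\}}^G 1$ with $L^{\{1\}} = L$, part (b) gives $L(s, r_G, L/K) = L(s, 1, L/L) = \zeta_L(s)$ by the first half of (c) applied to the trivial extension $L/L$.
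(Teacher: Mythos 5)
Your outline is correct and is exactly the standard derivation of these properties (additivity of local factors from block-diagonal determinants, the induction formula via the double-coset/Mackey decomposition matching primes of $L^{H}$ over $\idp$, and part (a) from Artin reciprocity together with Hecke's analytic continuation); the paper does not prove the lemma itself but cites Heilbronn's article, whose treatment follows this same route. The only step you leave implicit is the inflation-invariance needed in (a) to pass from $L/K$ to the abelian extension $L^{\ker\chi}/K$ before applying the reciprocity map, but that is another routine prime-by-prime comparison of local factors in the same spirit as your argument for (b), so it is not a genuine gap.
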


As a consequence, we have an explicit factorization of Dedekind zeta~functions.
\begin{corollary}
\label{cor:dedekind-zeta-factorization}
Let $L/K$ be a Galois extension of number fields. Then 
\begin{equation*}
\zeta_L(s) = \zeta_K(s) \prod_{\chi} L(s, \chi, L/K)^{\dim \chi},
\end{equation*}
where the product runs over all nontrivial irreducible characters $\chi$ of $\Gal(L/K)$.
\end{corollary}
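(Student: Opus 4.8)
The plan is to prove \cref{cor:dedekind-zeta-factorization} as a direct consequence of \cref{lem:artin-L}, specifically parts (c) and (d), together with the standard decomposition of the regular representation into irreducibles. The key algebraic fact driving everything is that the character $r_G$ of the regular representation of a finite group $G$ decomposes as
\begin{equation*}
r_G = \sum_{\chi} (\dim \chi)\, \chi,
\end{equation*}
where the sum runs over all irreducible characters $\chi$ of $G$ and each appears with multiplicity equal to its degree $\dim\chi$. This is a classical result from the representation theory of finite groups (the multiplicity of an irreducible $\chi$ in the regular representation equals $\langle r_G, \chi\rangle = \dim\chi$).

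First I would isolate the trivial character from this sum, writing $r_G = 1 \cdot \mathbf{1} + \sum_{\chi \neq \mathbf{1}} (\dim\chi)\,\chi$, where $\mathbf{1}$ is the trivial character (which has $\dim \mathbf{1} = 1$) and the remaining sum runs over nontrivial irreducible characters. Next I would invoke \cref{lem:artin-L}(c) to identify $L(s, r_G, L/K) = \zeta_L(s)$, so that the left-hand side of the desired identity is in place. Then, applying \cref{lem:artin-L}(d) repeatedly (that is, using the additivity of characters in the first argument, which turns sums of characters into products of the corresponding $L$-functions), the decomposition of $r_G$ translates into
\begin{equation*}
\zeta_L(s) = L(s, \mathbf{1}, L/K) \prod_{\chi \neq \mathbf{1}} L(s, \chi, L/K)^{\dim\chi}.
\end{equation*}
Finally, I would use \cref{lem:artin-L}(c) once more to identify the first factor $L(s, \mathbf{1}, L/K) = \zeta_K(s)$, yielding exactly the stated factorization.

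There is no real obstacle here, as the statement is purely formal given the lemma: the only ingredient not explicitly recorded in the excerpt is the decomposition of the regular representation, which is standard. The one point deserving a moment's care is the meaning of raising an $L$-function to the power $\dim\chi$ in \cref{lem:artin-L}(d); strictly that part is stated for two characters $\chi_1, \chi_2$, so I would note that it extends by an easy induction to any finite sum of characters, and in particular $L(s, (\dim\chi)\chi, L/K) = L(s,\chi,L/K)^{\dim\chi}$. Beyond this bookkeeping, the corollary follows immediately, which is why it is presented as a corollary rather than a standalone proposition.
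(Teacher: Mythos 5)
Your proof is correct and follows exactly the route the paper intends: the decomposition $r_G = \mathbf{1} + \sum_{\chi\neq\mathbf{1}}(\dim\chi)\,\chi$ of the regular representation combined with parts (c) and (d) of \cref{lem:artin-L}, which is why the paper presents the factorization as an immediate consequence of that lemma. Your remark that (d) extends by induction to arbitrary finite sums of characters is the right bookkeeping point, and nothing further is needed.
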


Another corollary is the entireness of certain Artin $L$-functions.
\begin{corollary}\label{cor:hol-artin-induct}
Let $L/K$ be a Galois extension of number fields with Galois group $G$ and $\chi$ be a character of $G$ induced from a nontrivial one-dimensional character $\psi$ of a subgroup $H$ of $G$. Then $L(s, \chi, L/K)$ is entire.
\end{corollary}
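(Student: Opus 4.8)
The plan is to reduce the claim to the one-dimensional case, where holomorphy is already available through the identification with Hecke $L$-functions. Writing $\chi = \Ind_H^G \psi$, the first step is to invoke the inductivity of Artin $L$-functions recorded in part (b) of \cref{lem:artin-L}, which yields
\begin{equation*}
L(s, \chi, L/K) = L(s, \Ind_H^G \psi, L/K) = L(s, \psi, L/L^{H}).
\end{equation*}
This transports the problem from the induced representation $\chi$ of $G$ to the one-dimensional character $\psi$, now regarded over the intermediate field $L^{H}$.

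The second step is to observe that $L/L^{H}$ is itself a Galois extension with $\Gal(L/L^{H}) = H$, so that $\psi$ is a bona fide one-dimensional character of the relevant Galois group. Part (a) of \cref{lem:artin-L} then applies directly: the Artin $L$-function attached to a one-dimensional character is a Hecke $L$-function, holomorphic on all of $\C$ unless that character is trivial. Since $\psi$ is nontrivial by hypothesis, $L(s, \psi, L/L^{H})$ is entire, and hence so is $L(s, \chi, L/K)$.

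There is no substantive obstacle here, as the analytic content is entirely encapsulated in \cref{lem:artin-L}. The only point requiring care is the bookkeeping of fields and groups — namely, confirming that $L^{H}$ is the correct base field for the induced $L$-function and that the nontriviality of $\psi$ is preserved under the passage from $G$ to $H$ — both of which are immediate from the definitions of the fixed field and of induction.
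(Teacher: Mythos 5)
Your proof is correct and follows exactly the paper's route: the paper's proof of this corollary is the one-line observation that it is a direct consequence of \cref{lem:artin-L}~(a) and (b), and your write-up simply spells out that chain of reasoning (inductivity to pass to $L(s,\psi,L/L^{H})$, then holomorphy of the Hecke $L$-function attached to the nontrivial character $\psi$).
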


\begin{proof}
This is a direct consequence of \cref{lem:artin-L}~(a) and (b).
\end{proof}

We may also combine \cref{cor:hol-artin-induct} with a representation-theoretic theorem to obtain some stronger results on the entireness of certain $L$-functions. For a group $G$, we let $r_G$ denote the character of the regular representation of $G$ and $1=1_G$ denote the character of the trivial representation of $G$. 

\begin{lemma}[Aramata--Brauer~\cite{aramata}]\label{lem:aramata-brauer}
Let $G$ be a finite group. There exist positive rational~numbers $\lambda_{i}$ and characters $\chi_{i}$ of $G$ such that
\begin{equation*}
r_{G} = 1+\sum_{i} \lambda_{i} \chi_{i},
\end{equation*}
where each $\chi_{i}$ is the induction of a one-dimensional character of some cyclic subgroup of $G$.
\end{lemma}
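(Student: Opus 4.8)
The statement is equivalent to a positivity (cone-membership) assertion about the virtual character $\theta_G \coloneqq r_G - 1_G$. Since induction is transitive and, for a cyclic group $C$, the difference $r_C - 1_C = \sum_{\psi \neq 1_C}\psi$ is precisely the sum of the nontrivial one-dimensional characters of $C$, the class function $\Ind_C^G(r_C - 1_C) = r_G - \Ind_C^G 1_C$ already lies in the set $\mathcal{K}$ of nonnegative rational combinations of characters $\Ind_C^G \psi$ with $C \le G$ cyclic and $\psi$ one-dimensional. Thus the plan is to exhibit $\theta_G$ as a member of $\mathcal{K}$; in fact I would aim for the slightly stronger classical form of Aramata's theorem in which every $\psi$ is taken nontrivial, which implies the stated lemma \emph{a fortiori} and is what the holomorphy application via \cref{cor:hol-artin-induct} really wants.

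I would argue by strong induction on $|G|$. When $G$ is cyclic the claim is immediate, since $\theta_G = \sum_{\psi \neq 1_G}\psi$ and each $\psi = \Ind_G^G \psi$. For the inductive step the transitivity of induction is the engine: for every proper subgroup $H < G$ the inductive hypothesis places $r_H - 1_H$ in the analogous cone $\mathcal{K}_H$ for $H$, and applying $\Ind_H^G$ gives
\[
\Ind_H^G(r_H - 1_H) = r_G - \Ind_H^G 1_H \in \mathcal{K}.
\]
This furnishes a large supply of cone elements of the shape $r_G - \Ind_H^G 1_H$, one for each proper subgroup. It then remains to assemble these nonnegatively, using the Frobenius-reciprocity decomposition $\Ind_H^G 1_H = 1_G + (\text{nontrivial constituents})$, so as to isolate exactly $r_G - 1_G$. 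Concretely, I would determine the requisite weights by Möbius inversion over the poset of cyclic subgroups of $G$ (equivalently, by invoking Artin's rational induction theorem to write the trivial character $1_G$ as a $\Q$-combination of the permutation characters $\Ind_C^G 1_C$) and then solve the resulting linear system for the coefficients.

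The hard part will be the \emph{positivity} of those coefficients, and it constitutes essentially the entire content of the theorem. The difficulty is structural: the induced permutation characters $\Ind_H^G 1_H$ each contain nontrivial irreducible constituents with strictly positive multiplicity, so a nonnegative combination of the $r_G - \Ind_H^G 1_H$ can never cancel this excess to leave the degree-$(|G|-1)$, constituent-free object $r_G - 1_G$; a careless choice of weights produces negative coefficients (indeed the most naive ``supported on generators'' choice yields Ramanujan sums $\mu(n)$, which change sign). Overcoming this requires Aramata's averaging device, which reduces the nonnegativity to a clean combinatorial estimate for the relevant Möbius/Ramanujan-sum expression over the cyclic-subgroup lattice. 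I note that merely invoking Brauer's induction theorem does not suffice here, as it yields a signed integral combination induced from elementary (not necessarily cyclic) subgroups; the refinement to cyclic subgroups \emph{with nonnegative rational coefficients} is exactly the extra input to be supplied.
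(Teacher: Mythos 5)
The paper does not prove this lemma at all --- it is imported as a black box with a citation to Aramata (and is classical, due to Aramata and Brauer independently) --- so there is no internal argument to compare yours against. Judged on its own terms, your proposal is a correct framing of the problem but not a proof. You set up the cone $\mathcal K$, reduce to a positivity statement, and then explicitly defer the positivity --- which you yourself call ``essentially the entire content of the theorem'' --- to an unspecified ``averaging device'' and a ``clean combinatorial estimate'' that are never stated, let alone verified. An outline that names the hard step and points at the literature for it is not materially more than the paper's own decision to cite the result.

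There are also two concrete problems with the scaffolding you do build. First, your structural claim that a nonnegative combination of the elements $r_G-\Ind_H^G 1_H$ ``can never'' equal $r_G-1_G$ is false: for $G=S_3$ one has $\tfrac12\bigl(r_G-\Ind_{C_3}^G 1\bigr)+\bigl(r_G-\Ind_{C_2}^G 1\bigr)=\tfrac12(2\sigma)+(\epsilon+\sigma)=\epsilon+2\sigma=r_G-1_G$, where $\sigma$ is the $2$-dimensional irreducible and $\epsilon$ the sign character. So the obstruction you describe is not the actual difficulty, and your diagnosis of why the naive inductive assembly fails is itself incorrect. Second, the classical proof is not an induction on $|G|$ and does not weight the aggregated objects $\Ind_C^G(r_C-1_C)$: it weights individual characters of a given cyclic subgroup differently. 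Concretely, for each cyclic $A\le G$ of order $h$ one forms $\sum_{\langle a\rangle=A}\sum_{\psi\in\widehat A}\bigl(1-\overline{\psi(a)}\bigr)\psi$; summing over the generators $a$ of $A$ turns the coefficient of $\psi$ into $\varphi(h)$ minus a Ramanujan sum $\mu(d)\varphi(h)/\varphi(d)$ (with $d$ the order of $\psi$), which is nonnegative and vanishes precisely for $\psi=1_A$, and a pointwise evaluation shows that a suitable weighting of the inductions of these over all cyclic $A$ equals $|G|\,(r_G-1_G)$. If you intend a self-contained proof, you must produce this construction and carry out that computation; as written, the proposal identifies where the theorem is hard without resolving it, and the framework it erects is not the one the resolution actually uses.
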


\begin{corollary}\label{cor:hol-quotient}
If $L/K$ is a Galois extension of number fields, $\zeta_{L}(s)/\zeta_{K}(s)$ is holomorphic.
\end{corollary}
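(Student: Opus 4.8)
The plan is to combine the factorization properties of Artin $L$-functions from \cref{lem:artin-L} with the Aramata--Brauer decomposition of \cref{lem:aramata-brauer}. Write $G = \Gal(L/K)$. By \cref{lem:artin-L}~(c) we have $\zeta_L(s) = L(s, r_G, L/K)$ and $\zeta_K(s) = L(s, 1, L/K)$, so that the quotient $\zeta_L(s)/\zeta_K(s)$ is governed by the virtual character $r_G - 1$. By \cref{lem:aramata-brauer}, we may write
\begin{equation*}
r_G - 1 = \sum_i \lambda_i \chi_i,
\end{equation*}
where the $\lambda_i$ are positive rationals and each $\chi_i$ is induced from a one-dimensional character $\psi_i$ of a cyclic subgroup $H_i \le G$. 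Both $\zeta_L(s)$ and $\zeta_K(s)$ are meromorphic on $\C$, so the quotient is meromorphic, and the entire content of the claim is the absence of poles.

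The first issue to address is that the coefficients $\lambda_i$ are merely rational, whereas the multiplicativity in \cref{lem:artin-L}~(d) is stated for honest (integer-coefficient) characters. I would clear denominators: choosing a common denominator $m \in \Z_{>0}$ with $m\lambda_i \in \Z_{>0}$ for all $i$, the combination $m(r_G - 1) = \sum_i (m\lambda_i)\chi_i$ is a genuine character of $G$. Repeated application of \cref{lem:artin-L}~(d) then gives
\begin{equation*}
\left(\frac{\zeta_L(s)}{\zeta_K(s)}\right)^{\!m} = L\big(s, m(r_G-1), L/K\big) = \prod_i L(s, \chi_i, L/K)^{m\lambda_i}.
\end{equation*}
To conclude that the right-hand side is entire via \cref{cor:hol-artin-induct}, I must verify that each inducing character $\psi_i$ is \emph{nontrivial}. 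This follows by comparing multiplicities of the trivial character: the coefficient of $1$ in $r_G$ equals $\dim 1 = 1$, which is already accounted for by the explicit summand $1$, so $\langle \sum_i \lambda_i \chi_i, 1\rangle = 0$. Since every $\lambda_i > 0$ and every $\langle \chi_i, 1\rangle \ge 0$, we get $\langle \chi_i, 1\rangle = 0$ for each $i$, and by Frobenius reciprocity $\langle \chi_i, 1\rangle = \langle \psi_i, 1_{H_i}\rangle$, forcing each $\psi_i$ to be nontrivial. Thus \cref{cor:hol-artin-induct} applies to each factor, and the product above is entire.

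It then remains to pass from holomorphy of the $m$-th power back to holomorphy of $\zeta_L(s)/\zeta_K(s)$ itself. Because the quotient is meromorphic, at any point $s_0$ the order of vanishing satisfies $\ord_{s_0}\big((\zeta_L/\zeta_K)^m\big) = m\cdot \ord_{s_0}(\zeta_L/\zeta_K)$; since the left side is a nonnegative integer (the power being holomorphic), we conclude $\ord_{s_0}(\zeta_L/\zeta_K) \ge 0$ at every $s_0$, so the quotient has no poles and is holomorphic. The main obstacle here is purely the bookkeeping around the rational coefficients and the nontriviality check: once denominators are cleared and the inducing characters are confirmed nontrivial, the result is an immediate consequence of the entireness in \cref{cor:hol-artin-induct} together with the elementary order argument for extracting an $m$-th root of a holomorphic function within the meromorphic category.
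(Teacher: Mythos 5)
Your proof is correct and follows essentially the same route as the paper: apply the Aramata--Brauer decomposition, clear denominators to express $(\zeta_L/\zeta_K)^m$ as a product of Artin $L$-functions induced from one-dimensional characters, invoke \cref{cor:hol-artin-induct}, and extract the $m$-th root within the meromorphic category. You additionally verify two points the paper leaves implicit --- that each inducing character $\psi_i$ is nontrivial (needed for \cref{cor:hol-artin-induct} to apply) and the order-of-vanishing argument for passing from the power back to the quotient --- both of which are handled correctly.
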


\begin{proof}
Since Dedekind zeta functions are holomorphic (except for a pole of order $1$ at $s = 1$), the~quotient $\zeta_L(s)/\zeta_K(s)$ is meromorphic. To prove that it is holomorphic, we need only show that it has no poles.~Let $G = \mathrm{Gal}(L/K)$. Then, if
\begin{equation*}
r_{G}-1 = \sum_{i} \lambda_{i} \chi_{i},
\end{equation*}
\cref{lem:artin-L} renders that
\begin{equation*}
\left(\frac{\zeta_{L}(s)}{\zeta_{K}(s)}\right)^{N} = \prod_{i} L(s, \chi_{i}, L/K)^{N \lambda_{i}},
\end{equation*}
where $N$ is a positive integer such that $N \lambda_{i} \in \Z$ for each $i$. Via \cref{cor:hol-artin-induct}, the right-hand side is a holomorphic function, hence $\zeta_{L}(s)/\zeta_{K}(s)$ is as well.
\end{proof}

\begin{remark}\label{rmk:easy-sol-idea}
If some $\lambda_i$ in the decomposition of \cref{lem:aramata-brauer} exceeds $1$, then \cref{thm:mainthm} has a more direct proof, as each of the infinitely many nontrivial zeros of $L(s,\chi_i,L/K)$ would have multiplicity at least $\lambda_i$ and thus at least $\lceil\lambda_i\rceil\geq 2$. Unfortunately, at least in the explicit decomposition given in \cite{aramata}, this does not hold in general, even when identical characters are grouped appropriately.
\end{remark}

Before proving our main result, we need a class of groups where the Artin holomorphy~conjecture is known.

\begin{definition}
A group $G$ is \emph{monomial} if all of its irreducible characters are induced from characters of degree 1.
\end{definition}

By \cref{cor:hol-artin-induct}, the Artin holomorphy conjecture is known for all monomial Galois extensions of number fields. The following lemma due to Huppert presents an easily verifiable sufficient criterion for a group to be monomial which will be used subsequently.

\begin{lemma}[Huppert~{\cite{huppert-m-groups}}]
\label{lem:huppert}
Let $G$ be a finite group and let $N \lhd G$ be a proper normal subgroup for which $N$ is solvable and $G/N$ is supersolvable. If all Sylow subgroups of $N$ are abelian,~then~$G$ is monomial.
\end{lemma}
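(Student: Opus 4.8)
The plan is to establish the following stronger assertion by induction on $|G|$: whenever $G$ has a normal subgroup $N$ that is solvable with all Sylow subgroups abelian (an \emph{A-group}) and $G/N$ is supersolvable, then $G$ is monomial. The engine throughout is Clifford theory, together with the observation that all three hypotheses pass to the relevant smaller groups: subgroups and quotients of A-groups are A-groups (Sylow subgroups of a subgroup or quotient are contained in, respectively are images of, Sylow subgroups of the ambient group), and subgroups and quotients of supersolvable groups are supersolvable.

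First I would fix an irreducible character $\chi$ of $G$ and an irreducible constituent $\theta$ of $\chi|_N$. Writing $I_G(\theta)$ for its inertia group, the Clifford correspondence produces an irreducible $\psi$ of $I_G(\theta)$ lying over $\theta$ with $\psi^G = \chi$. If $\theta$ is not $G$-invariant, then $I_G(\theta)$ is a proper subgroup containing $N$, it inherits the hypotheses with normal subgroup $N$, and by induction $\psi$ --- hence $\chi = \psi^G$ --- is induced from a linear character. This reduces us to the case that $\theta$ is $G$-invariant. Since then $\ker\theta$ is normal in $G$ and is contained in $\ker\chi$ (because $\chi|_N = e\theta$ for some integer $e$), one may pass to $G/\ker\theta$, whose distinguished normal subgroup $N/\ker\theta$ is again a solvable A-group; this reduces us to the case that $\theta$ is faithful.

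The heart of the argument is the faithful, $G$-invariant case, and this is exactly where the abelian-Sylow hypothesis is indispensable. Here I would pass to the Fitting subgroup $A = F(N)$: since $N$ is solvable, $A$ is self-centralizing, $C_N(A) = A$; and since $N$ is an A-group, $A$ --- being nilpotent with abelian Sylow subgroups --- is abelian. Being characteristic in $N$, $A$ is normal in $G$. Let $\lambda$ be a linear constituent of $\chi|_A$. If $N$ is nonabelian, then $\lambda$ cannot be $N$-invariant: were it so, Clifford theory would give $\theta|_A = e_0\lambda$, faithfulness of $\theta$ would force $\lambda$ to be faithful and $A$ cyclic, and then $N$ would centralize $A$, yielding $N = C_N(A) = A$ abelian, a contradiction. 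Hence $N \not\le I_G(\lambda)$, so $I_G(\lambda)$ is a proper subgroup of $G$; applying the Clifford correspondence over $A$ and the inductive hypothesis to $I_G(\lambda)$ (with normal subgroup $N \cap I_G(\lambda)$) shows $\chi$ is monomial. If instead $N$ is abelian, then $\theta$ is itself a faithful $G$-invariant linear character, which forces $N$ to be cyclic and central in $G$.

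It remains to treat this residual configuration: $N \le Z(G)$ cyclic with $G/N$ supersolvable. I would handle it by a secondary induction on $[G:N]$, and this is where supersolvability does its work. When $G \ne N$, the group $G/N$ has a minimal normal subgroup $M/N$ of prime order, so $M$ is an abelian normal subgroup of $G$. Taking a linear constituent $\mu$ of $\chi|_M$ (so $\mu|_N = \theta$) and examining $I_G(\mu)$, one either drops to a proper inertia subgroup and inducts, factors out a nontrivial $\ker\mu \le \ker\chi$ and inducts on a smaller quotient, or finds $\mu$ faithful and $G$-invariant, in which case $M$ is cyclic and central and one replaces $N$ by the strictly larger central cyclic subgroup $M$. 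The main obstacle, then, is organizing these Clifford-theoretic reductions so that each branch strictly decreases a well-chosen complexity measure; the key structural input that makes the nonabelian case terminate rather than loop is precisely that abelian Sylow subgroups force $F(N)$ to be abelian and self-centralizing.
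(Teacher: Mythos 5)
The paper does not prove this lemma---it is quoted from Huppert's work and used as a black box---so there is no internal proof to compare against. Judged on its own merits, your sketch is a correct reconstruction of the standard argument (essentially the proof in Huppert's original paper, reproduced as Theorems 6.22--6.23 of Isaacs, \emph{Character Theory of Finite Groups}). All the essential reductions are present and sound: the passage to a proper inertia group when $\theta$ is not $G$-invariant, the reduction to faithful $\theta$ via $\ker\theta = \ker\chi \cap N \lhd G$, the use of $F(N)$ (abelian because nilpotent with abelian Sylow subgroups, self-centralizing because $N$ is solvable, normal in $G$ because characteristic in $N$) to force either a proper inertia subgroup or $N$ abelian central, and the final supersolvable descent through a minimal normal subgroup $M/N$ of prime order. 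The one point you flag as an "obstacle"---choosing a complexity measure---is not actually problematic: every branch either strictly decreases $|G|$ (proper inertia group, or nontrivial kernel factored out) or keeps $|G|$ fixed while strictly decreasing $[G:N]$ (replacing $N$ by the larger central cyclic $M$), so lexicographic induction on the pair $(|G|,[G:N])$ terminates. One could also shortcut your final stage by observing that once $N$ is cyclic and central, $G$ is itself supersolvable (refine $1\le N\le G$ using that every subgroup of $N$ is normal in $G$), reducing to the classical fact that supersolvable groups are monomial---though proving that fact requires the same minimal-normal-subgroup argument you give, so nothing is really saved.
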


We also need a standard group-theoretic lemma. For completeness, we reproduce the~proof.

\begin{lemma}\label{lem:simple-group-nonabelian-subgp}
Any finite simple group $G$ besides $\Z/p\Z$ has a nonabelian proper subgroup.
\end{lemma}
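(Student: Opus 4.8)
The plan is to proceed by contradiction: suppose $G$ is a finite simple group with $G \ne \Z/p\Z$ (so $G$ is necessarily nonabelian, since a finite abelian simple group is cyclic of prime order) in which \emph{every} proper subgroup is abelian, and derive a contradiction by counting elements. I would record two immediate facts: $Z(G) = 1$ because $G$ is nonabelian simple, and $G$ is noncyclic, so every nonidentity element generates a proper subgroup and hence lies in at least one maximal subgroup.

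The crucial structural step, which I expect to be the main obstacle, is to show that the maximal subgroups of $G$ form a \emph{partition} of the nonidentity elements. First I would observe that each maximal subgroup $M$ is self-normalizing: since $M$ is proper and nontrivial, $N_G(M) = G$ would make $M$ a nontrivial proper normal subgroup, contradicting simplicity, so $N_G(M) = M$ and $M$ therefore has exactly $[G:M]$ conjugates. Next, for any $1 \ne g \in G$, I would pick a maximal subgroup $M \ni g$; since $M$ is abelian by hypothesis we have $M \subseteq C_G(g)$, while $C_G(g) \ne G$ (else $g \in Z(G) = 1$), so maximality forces $C_G(g) = M$. This identifies $M$ with $C_G(g)$ and hence shows $M$ is the \emph{unique} maximal subgroup containing $g$; in particular, distinct maximal subgroups meet only in the identity.

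With the partition in hand, I would finish by counting. Let $M_1, \dots, M_k$ be representatives of the conjugacy classes of maximal subgroups; the class of $M_i$ contains $[G:M_i]$ members, each contributing $|M_i| - 1$ nonidentity elements, all disjoint, so
\begin{equation*}
|G| - 1 = \sum_{i=1}^{k} [G:M_i]\,(|M_i| - 1) = \sum_{i=1}^{k} \bigl(|G| - [G:M_i]\bigr),
\end{equation*}
using $[G:M_i]\,|M_i| = |G|$. Since each maximal subgroup has order at least $2$, we have $[G:M_i] \le |G|/2$, so each summand is at least $|G|/2$. If $k \ge 2$ the right-hand side is at least $|G| > |G| - 1$, while if $k = 1$ it equals $|G| - [G:M_1] \le |G| - 2 < |G| - 1$; in either case equality is impossible. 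This contradiction shows that $G$ must contain a nonabelian proper subgroup.
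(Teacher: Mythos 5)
Your proof is correct and follows essentially the same strategy as the paper's: both show that maximal subgroups are self-normalizing and pairwise intersect trivially, and then derive a contradiction by counting nonidentity elements in the conjugates of maximal subgroups. The only differences are cosmetic --- you obtain the trivial-intersection property via the identification $M = C_G(g)$ rather than via the normalizer of $H_1 \cap H_2$, and you count over all conjugacy classes of maximal subgroups at once rather than over just two non-conjugate ones.
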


\begin{proof}
Assume that this is not the case. Consider any maximal proper subgroup $H$ of $G$. Its normalizer is contained between $H$ and~$G$, so it must be either $H$ or $G$. If it were $G$, then $H$ would be normal, contradicting the simplicity~of $G$, so $H$ must be its own normalizer. If $H_{1}$ and $H_{2}$ are any two distinct maximal proper~subgroups of $G$, the normalizer of $H_{1} \cap H_{2}$ must contain both $H_{1}$ and $H_{2}$, since both $H_{1}$ and $H_{2}$ are abelian. As a consequence, it must be $G$ itself; this gives that $H_1\cap H_2$ is the trivial subgroup, as otherwise would contradict the simplicity of $G$. Hence, any maximal proper subgroup $H$ of $G$ has $|G|/|H|$ distinct conjugates, the union of which comprises exactly
\begin{equation*}
1+\frac{|G|}{|H|}(|H|-1)=|G|-\frac{|G|}{|H|}+1
\end{equation*}
elements. Since every group of non-prime order has a proper nontrivial subgroup, there is~some element $x\in G$ not counted in any conjugate of $H$. Thus there must exist some~maximal~proper subgroup $H'$ (the maximal proper subgroup containing $x$, say) of $G$ that is not a conjugate of $H$. The conjugates of this subgroup comprise $|G|-|G|/|H'|+1$ elements, of which only~the identity can be in any conjugate of $H$. Then we have that
\begin{equation*}
|G| \geq \left(|G|-\frac{|G|}{|H|}+1 \right)+\left(|G|-\frac{|G|}{|H'|}+1 \right)-1,
\end{equation*}
which implies $\min(|H|, |H'|) < 2$, a contradiction.
\end{proof}

\section{A Theorem of Stark}\label{sec:stark}

Stark \cite{stark1974some}, Foote and Murty \cite{foote_murty_1989}, and Foote and Wales \cite{footeorder2} considered the~holomorphy of Artin $L$-functions at a given point. Just as the Artin holomorphy conjecture implies the existence of higher order nontrivial zeros of Dedekind zeta functions, this more local phenomenon may be used to establish the existence of higher order zeros in certain circumstances. To elucidate this point, we recall a theorem of Stark to produce a first proof of \cref{thm:mainthm}.

\begin{lemma}[{Stark \cite[Theorem 3]{stark1974some}}]
\label{thm:stark}
Let $L/K$ be a Galois extension of number fields. Let $\rho \neq 1$ be such that the order of vanishing of $\zeta_L(s)$ at $s = \rho$ is at most 1. Then $L(s, \chi, L/K)$~is holomorphic at $s = \rho$ for all characters $\chi$.
\end{lemma}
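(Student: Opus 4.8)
The plan is to encode the local orders of vanishing of all Artin $L$-functions into a single virtual character --- the \emph{Heilbronn character} --- and to exploit its compatibility with induction and restriction. Write $G=\Gal(L/K)$ and, for each character $\chi$ of $G$, set $n(\chi)\coloneqq\ord_{s=\rho}L(s,\chi,L/K)$; this is a well-defined integer since every Artin $L$-function is meromorphic on $\C$, and $\chi\mapsto n(\chi)$ is additive by \cref{lem:artin-L}(d). I would then define $\theta_G\coloneqq\sum_\chi n(\chi)\chi$, the sum over irreducible characters of $G$, so that $n(\eta)=\langle\eta,\theta_G\rangle$ for every genuine character $\eta$. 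The desired conclusion --- that $n(\chi)\ge0$ for all irreducible $\chi$ --- is then precisely the statement that $\theta_G$ is an effective (i.e.\ genuine) character, since holomorphy for a general character follows from the irreducible case via \cref{lem:artin-L}(d).

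The argument rests on two structural facts about $\theta_G$. First, for any subgroup $H\le G$ one has $\mathrm{Res}_H\theta_G=\theta_H$, where $\theta_H$ is the Heilbronn character of $L/L^H$: for an irreducible $\psi$ of $H$, \cref{lem:artin-L}(b) gives $n_G(\Ind_H^G\psi)=n_H(\psi)$, and combining this with Frobenius reciprocity $\langle\Ind_H^G\psi,\theta_G\rangle=\langle\psi,\mathrm{Res}_H\theta_G\rangle$ and with $n_H(\psi)=\langle\psi,\theta_H\rangle$, valid for all $\psi$, yields the identity. Second, I specialize to a cyclic subgroup $C=\langle g\rangle$: every irreducible character of $C$ is one-dimensional, so by \cref{lem:artin-L}(a) each $L(s,\psi,L/L^C)$ is a Hecke $L$-function, holomorphic at $\rho\ne1$ (the trivial $\psi$ contributing $\zeta_{L^C}$, whose only pole is at $s=1$). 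Hence $n_C(\psi)\ge0$ for every $\psi$, so $\theta_C=\mathrm{Res}_C\theta_G$ is an effective character of $C$.

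With these in hand, the heart of the proof is the pointwise bound $|\theta_G(g)|\le\theta_G(1)$ for all $g\in G$. Put $r\coloneqq\theta_G(1)=\sum_\chi n(\chi)\dim\chi=\ord_{s=\rho}\zeta_L(s)$, using \cref{lem:artin-L}(c). Expanding the effective character $\theta_C=\sum_\psi c_\psi\psi$ with $c_\psi=n_C(\psi)\ge0$ and $\sum_\psi c_\psi=\theta_C(1)=r$, the triangle inequality gives $|\theta_G(g)|=|\theta_C(g)|\le\sum_\psi c_\psi|\psi(g)|=\sum_\psi c_\psi=r$, since each $\psi$ is one-dimensional. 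Feeding this into the norm of $\theta_G$ produces
\begin{equation*}
\sum_\chi n(\chi)^2=\langle\theta_G,\theta_G\rangle=\frac{1}{|G|}\sum_{g\in G}|\theta_G(g)|^2\le r^2.
\end{equation*}
Since $\rho\ne1$ forces $r\ge0$, and $r\le1$ by hypothesis, we get $r\in\{0,1\}$ and $\sum_\chi n(\chi)^2\le1$. Thus at most one $n(\chi)$ is nonzero, of value $\pm1$; comparing with $r=\sum_\chi n(\chi)\dim\chi\in\{0,1\}$ (and noting $\dim\chi\ge1$) forces every $n(\chi)\ge0$, which is exactly the holomorphy claimed.

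The genuinely substantive step is the value bound $|\theta_G(g)|\le r$: it is what couples the analytic input (holomorphy of Hecke $L$-functions at $\rho\ne1$) to the representation theory, and it is where both structural facts above are used in tandem. Everything downstream is elementary arithmetic with integers. The bookkeeping to set up $n$ on (virtual) characters and to justify $\mathrm{Res}_H\theta_G=\theta_H$ must be handled with care, and one must note that orders of vanishing are honest integers so that $\sum_\chi n(\chi)^2\le1$ has the claimed consequence, but these are routine once meromorphic continuation of Artin $L$-functions is granted.
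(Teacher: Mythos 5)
Your proposal is correct and is exactly the argument the paper sketches: it constructs the same virtual character $\theta=\sum_\chi\chi\cdot\ord_{s=\rho}L(s,\chi,L/K)$, uses Frobenius reciprocity and \cref{lem:artin-L} to control its restrictions to cyclic subgroups, and deduces that $\theta$ is a genuine character, hence that each $n(\chi)\ge 0$. The details you supply (the pointwise bound $|\theta_G(g)|\le\ord_{s=\rho}\zeta_L(s)$ and the $\ell^2$ estimate) are the standard Heilbronn--Stark completion of that sketch and are sound.
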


The proof of this theorem is largely representation theoretic, using Frobenius reciprocity, \cref{lem:artin-L} and \cref{cor:hol-quotient} to show that the virtual character
$$\theta \coloneqq \sum_{\chi} \chi \cdot \ord_{s = \rho}L(s, \chi, L/K)$$
is a genuine character, and hence $L(s, \chi, L/K)$ is holomorphic at $s = \rho$ for all $\chi$. We now~give an initial proof of \cref{thm:mainthm}.

\begin{proof}[First Proof of \cref{thm:mainthm}]
Assume that $L/K$ is Galois but not abelian. Then $\Gal(L/K)$ has an irreducible representation $\chi$ of degree at least 2. Let $\rho$ be a zero or pole of $L(s, \chi, L/K)$ in the critical strip. It is known that infinitely many such $\rho$ exist.

Suppose for the sake of contradiction that $\ord_{s = \rho} \zeta_L(s) \leq 1$. Then by \cref{thm:stark}, the Artin $L$-function $L(s, \chi, L/K)$ is holomorphic at $\rho$ for each $\chi$. In particular, $L(\rho, \chi, L/K) = 0$. By \cref{cor:dedekind-zeta-factorization}, $\zeta_L(s)$ has a zero of order at least $\chi(1) > 1$ at $\rho$. Thus, $\zeta_L(s)$ has infinitely many nontrivial zeros of order at least 2.
\end{proof}

\section{A New Proof of \cref{thm:mainthm}}\label{sec:Proof-of-Theorem-1.1}

Let $\mathcal{S}_n$ be the set of finite groups $G$ with the property that for any Galois extension $L/K$ of number fields with Galois~group $G$, the Dedekind zeta function $\zeta_{L}(s)$ has infinitely many nontrivial zeros with multiplicity at least $n$. We establish by contradiction that all nonabelian groups $G$ are in $\mathcal S_2$. First of all, we show that this holds for all nonabelian monomial groups.

\begin{lemma}\label{lem:monomial-in-s}
Let $G$ be a finite nonabelian monomial group. Then $G \in \mathcal{S}_2$.
\end{lemma}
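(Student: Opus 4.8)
The plan is to use monomiality as a drop-in replacement for Stark's theorem in the first proof of \cref{thm:mainthm}: rather than invoking holomorphy at a single point, I would exploit that monomiality makes \emph{every} relevant Artin $L$-function globally entire, which removes any possibility of cancellation in the factorization of $\zeta_L$.

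First I would select the character that forces high multiplicity. Since $G$ is nonabelian, it has an irreducible character $\chi$ with $\chi(1)\geq 2$, and monomiality gives $\chi=\Ind_H^G\psi$ for a one-dimensional character $\psi$ of a subgroup $H$. Frobenius reciprocity yields $\langle\chi,1_G\rangle=\langle\psi,1_H\rangle$; as $\chi$ is a nontrivial irreducible character the left side vanishes, so $\psi$ is nontrivial (and $[G:H]=\chi(1)\geq 2$). Hence \cref{cor:hol-artin-induct} shows $L(s,\chi,L/K)$ is entire, and by \cref{lem:artin-L}~(b) it equals the Hecke $L$-function $L(s,\psi,L/L^H)$ attached to the nontrivial character $\psi$ (\cref{lem:artin-L}~(a)). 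I would then invoke the classical fact that a Hecke $L$-function of a nontrivial character has infinitely many zeros in the critical strip, and fix any such zero $\rho$.

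The decisive step is to convert the zero at $\rho$ into a zero of $\zeta_L$ of multiplicity at least $2$ without losing order to cancellation. Here the full strength of monomiality enters: every nontrivial irreducible character $\chi'$ of $G$ is induced from a nontrivial one-dimensional character --- the reciprocity computation above shows the inducing character cannot be trivial, since $\chi'\neq 1_G$ --- so \cref{cor:hol-artin-induct} makes $L(s,\chi',L/K)$ entire for all such $\chi'$. Therefore in
\[
\zeta_L(s)=\zeta_K(s)\prod_{\chi'}L(s,\chi',L/K)^{\chi'(1)}
\]
from \cref{cor:dedekind-zeta-factorization}, every factor has nonnegative order at $\rho$, and $\zeta_K$ is holomorphic at $\rho$ because $\rho\neq 1$. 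Summing orders then gives
\[
\ord_{s=\rho}\zeta_L(s)\geq \chi(1)\cdot\ord_{s=\rho}L(s,\chi,L/K)\geq\chi(1)\geq 2.
\]
As $\rho$ ranges over the infinitely many critical-strip zeros produced above, this yields infinitely many nontrivial zeros of $\zeta_L$ of multiplicity at least $2$, proving $G\in\mathcal{S}_2$.

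I expect the only delicate points to be the observation (used twice) that no nontrivial irreducible character can be induced from the trivial character, which is exactly what guarantees that all factors are entire and hence that no pole can cancel the zero at $\rho$, together with the standard analytic input that a nontrivial Hecke $L$-function has infinitely many zeros in the critical strip. Everything else is a direct assembly of \cref{lem:artin-L}, \cref{cor:hol-artin-induct}, and \cref{cor:dedekind-zeta-factorization}.
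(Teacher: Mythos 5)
Your proof is correct and follows essentially the same route as the paper's: factor $\zeta_L$ via \cref{cor:dedekind-zeta-factorization}, use monomiality with \cref{cor:hol-artin-induct} to make every factor entire, and let the infinitely many zeros of a degree-$\geq 2$ factor force multiplicity at least $2$. You merely make explicit two points the paper leaves implicit --- that a nontrivial irreducible character cannot be induced from a trivial one (via Frobenius reciprocity), and that entireness of all factors rules out cancellation by poles --- which is careful but not a different argument.
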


\begin{proof}
Suppose that $L/K$ is a Galois extension of number fields with monomial Galois group $G$. Then, by \cref{cor:dedekind-zeta-factorization},
\begin{equation*}
\zeta_{L}(s) = L(s, r_{G}, L/K) = \zeta_K(s) \cdot \prod_{\chi} L(s, \chi, L/K)^{\dim \chi},
\end{equation*}
where the product is over all nontrivial irreducible characters $\chi$ of $G$. Since $G$ is nonabelian, some such $\chi$ has degree greater than $1$. By \cref{cor:hol-artin-induct} and the definition of a monomial group,~each $L(s, \chi, L/K)$ is an entire~function with infinitely many nontrivial zeros. If $\dim \chi > 1$, then the infinitely many nontrivial zeros~of $L(s, \chi, L/K)$ occur with multiplicity at least $2$ as zeros of $\zeta_{L}(s)$. Hence we conclude that $G \in \mathcal{S}_2$.
\end{proof}

Next, we show that the property of a group belonging to $\mathcal{S}_n$ is induced through its subgroups and quotients by its normal subgroups.

\begin{lemma}\label{lem:subgp-and-quotient}
Let $G$ be a finite group and $n$ be any positive integer.
\begin{enumerate}[label=(\arabic*), font=\normalfont]
    \item If $H$ is a subgroup of $G$ and $H \in \mathcal S_n$, then $G \in \mathcal S_n$.
    \item If $N \lhd G$ is a normal subgroup and $G/N \in \mathcal S_n$, then $G \in  \mathcal S_n$.
\end{enumerate}
\end{lemma}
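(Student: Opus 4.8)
The plan is to exploit the Galois correspondence to realize $\zeta_L$ (in part (1)) or an auxiliary Dedekind zeta function (in part (2)) as the top zeta function of a Galois extension whose group is $H$ or $G/N$, so that the membership hypothesis can be applied directly. The recurring point is that $\zeta_L$ depends only on the field $L$, not on the choice of base field, and that membership in $\mathcal{S}_n$ is a statement quantified over \emph{all} Galois extensions with a given Galois group.

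For part (1), I would take an arbitrary Galois extension $L/K$ with $\Gal(L/K) = G$ and pass to the intermediate field $M = L^H$ fixed by $H$. By the fundamental theorem of Galois theory, $L/M$ is Galois with $\Gal(L/M) = H$. Since $H \in \mathcal{S}_n$ and $L/M$ is a Galois extension of number fields with Galois group $H$, the defining property of $\mathcal{S}_n$ applied to $L/M$ asserts precisely that $\zeta_L$ has infinitely many nontrivial zeros of multiplicity at least $n$. As $L/K$ was arbitrary, this yields $G \in \mathcal{S}_n$. The only verification needed is that $L/M$ is Galois with group exactly $H$, which is immediate.

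For part (2), I would again fix an arbitrary $L/K$ with $\Gal(L/K) = G$ and now set $M = L^N$. Because $N \lhd G$, the extension $M/K$ is Galois with $\Gal(M/K) = G/N$, so applying $G/N \in \mathcal{S}_n$ to $M/K$ gives that $\zeta_M$ has infinitely many nontrivial zeros of multiplicity at least $n$. It remains to transfer these zeros up to $\zeta_L$. Since $L/M$ is Galois (its group is $N \le G$), \cref{cor:hol-quotient} shows that $\zeta_L/\zeta_M$ is holomorphic; writing $\zeta_L = \zeta_M \cdot (\zeta_L/\zeta_M)$ and comparing orders of vanishing, one sees $\ord_{s=\rho}\zeta_L \geq \ord_{s=\rho}\zeta_M$ at every point $\rho$, since the holomorphic factor contributes a nonnegative order. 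Hence each of the infinitely many nontrivial zeros of $\zeta_M$ of multiplicity $\geq n$ is a zero of $\zeta_L$ of multiplicity $\geq n$, and these lie in the critical strip, so they are nontrivial zeros of $\zeta_L$. Therefore $G \in \mathcal{S}_n$.

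Neither step is a serious obstacle, and the heart of the argument is purely the Galois correspondence together with the invariance of $\zeta_L$ under change of base field. The one place requiring care is the transfer in part (2): one must ensure that dividing by $\zeta_M$ does not lower multiplicities. This is exactly what the holomorphy of $\zeta_L/\zeta_M$ from \cref{cor:hol-quotient} guarantees, and that holomorphy in turn depends on $L/M$ being Galois so that the Aramata--Brauer decomposition (\cref{lem:aramata-brauer}) applies.
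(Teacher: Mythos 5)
Your proposal is correct and follows the paper's own proof essentially verbatim: part (1) applies the definition of $\mathcal S_n$ to $L/L^H$, and part (2) applies it to $L^N/K$ and then transfers the zeros upward via the holomorphy of $\zeta_L/\zeta_{L^N}$ from \cref{cor:hol-quotient}. The extra care you take in justifying that the holomorphic quotient cannot lower multiplicities is a correct elaboration of a step the paper leaves implicit.
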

\begin{proof}
Let $L/K$ be any Galois extension of number fields with $\Gal(L/K) = G$.
\begin{enumerate}
    \item If $H \in \mathcal{S}_n$ is a subgroup of $G$, then $L/L^{H}$ is a Galois extension with Galois group~$H$. Since $H \in \mathcal{S}_n$, this means that $\zeta_{L}(s)$ has infinitely many nontrivial zeros with multiplicity at least $n$. Hence we have $G \in \mathcal{S}_n$.
    \item If $N \lhd G$ is a normal subgroup for which $G/N \in \mathcal{S}_n$, then $L^{N}/K$ is a Galois extension with Galois group $G/N$. The Dedekind zeta function $\zeta_{L^N}(s)$ thus has infinitely many nontrivial zeros of~multiplicity at least $n$. By \cref{cor:hol-quotient},
    \begin{equation*}
        \frac{\zeta_{L}(s)}{\zeta_{L^{N}}(s)}
    \end{equation*}
    is holomorphic, meaning that $\zeta_{L}$ also has infinitely many nontrivial zeros of multiplicity at least $n$. Hence we have $G \in \mathcal{S}_n$. \qedhere
\end{enumerate}
\end{proof}

We are now ready to establish~\cref{thm:mainthm}. 
\begin{proof}[Second Proof of Theorem \ref{thm:mainthm}]
Suppose for the sake of contradiction that there is a finite nonabelian group not in $\mathcal S_2$; let $G$ be such a group of minimal order.

By \cref{lem:subgp-and-quotient}, such a group~$G$ may only have abelian subgroups or quotients, as otherwise this would contradict the minimality of $G$. However, by \cref{lem:simple-group-nonabelian-subgp}, $G$ cannot be simple,~since nonabelian simple groups contain some nonabelian proper subgroup. Therefore, $G$ must have some nontrivial proper normal subgroup; take $N$ to be such a subgroup. Both of $N$ and $G/N$ are abelian, meaning that they are supersolvable. Hence, by \cref{lem:huppert}, $G$ is monomial,~which by \cref{lem:monomial-in-s} means $G \in \mathcal{S}_2$, as desired.
\end{proof}

\section{Proof of~\cref{thm:degree3}}\label{sec:degree-3-proof}

Although \cref{thm:mainthm} is strictly weaker than Stark's theorem, the method demonstrated in \cref{sec:Proof-of-Theorem-1.1} is much more amenable to collections of less well-behaved groups, like the collection of finite non-solvable groups. We will utilize this to prove \cref{thm:degree3} in the non-solvable~case.

First, however, we address the case where the Galois group is solvable; this proof may be completed in two ways. One method exploits the straightforward order-two generalization of Stark's result as proven by Foote and Wales. 
\begin{lemma}[{Foote--Wales \cite{footeorder2}}]\label{FooteWales}
Let $L/K$ be a solvable extension of number~fields and let $\rho \neq 1$ be such that the order of vanishing of $\zeta_L(s)$ at $s = \rho$ is at most $2$. Then $L(s, \chi, L/K)$ is holomorphic at $s = \rho$ for all characters $\chi$.
\end{lemma}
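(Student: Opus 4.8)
The plan is to generalize the representation-theoretic argument sketched after \cref{thm:stark}, replacing the bound $\ord_{s=\rho}\zeta_L(s)\le 1$ by the bound $2$. Write $G = \Gal(L/K)$ and, for each character $\chi$ of $G$, set $n(\chi) \coloneqq \ord_{s=\rho} L(s,\chi,L/K)$. Because Artin $L$-functions are meromorphic and multiplicative in the character by \cref{lem:artin-L}~(d), the assignment $\chi \mapsto n(\chi)$ is additive and integer-valued, so it extends to a $\Z$-linear functional on the representation ring, and the Heilbronn character
\begin{equation*}
\Theta \coloneqq \sum_{\chi} n(\chi)\, \chi,
\end{equation*}
with the sum running over the irreducible characters of $G$, is a well-defined virtual character. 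Since $n(\chi) = \langle \Theta, \chi\rangle$ for each irreducible $\chi$, the lemma reduces to showing that $\Theta$ is a genuine (effective) character: this forces $n(\chi)\ge 0$ for every irreducible $\chi$, and hence, by additivity, holomorphy of $L(s,\chi,L/K)$ at $s=\rho$ for all characters $\chi$.

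I would then record the two inputs that constrain $\Theta$. First, positivity against monomial characters: for any subgroup $H\le G$ and any one-dimensional character $\psi$ of $H$, \cref{lem:artin-L}~(a) and~(b) give that $L(s,\Ind_H^G\psi, L/K) = L(s,\psi,L/L^H)$ is a Hecke $L$-function, holomorphic everywhere except for a possible pole at $s=1$; as $\rho\ne 1$, we get $n(\Ind_H^G\psi)\ge 0$. By Frobenius reciprocity this reads $\langle \Theta, \Ind_H^G\psi\rangle = \langle \operatorname{Res}_H\Theta, \psi\rangle \ge 0$, so $\Theta$ is effective against every monomial character; in particular $\operatorname{Res}_C\Theta$ is a genuine character for every cyclic subgroup $C$. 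Second, the degree bound: by \cref{lem:artin-L}~(c), $\Theta(1) = \sum_\chi n(\chi)\chi(1) = \ord_{s=\rho} L(s,r_G,L/K) = \ord_{s=\rho}\zeta_L(s) \le 2$ by hypothesis.

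It remains to deduce effectiveness of $\Theta$ from these two facts, and this is the crux. When $\Theta(1)\le 1$ the conclusion is elementary: effectiveness on cyclic subgroups forces $|\Theta(g)|\le 1$, whence $\langle\Theta,\Theta\rangle \le 1$, and a nonzero virtual character of norm at most $1$ with $\Theta(1)=1$ is a single linear character---this recovers Stark. For $\Theta(1)=2$ the norm bound $\langle\Theta,\Theta\rangle\le 4$ no longer pins down the decomposition, and a solvable group may carry non-monomial irreducibles of degree $2$ (for instance $\SL_2(\F_3)$, whose abelianization is cyclic of order $3$, so it has no subgroup of index $2$), so monomial effectiveness does not by itself give $n(\chi)\ge 0$ on such characters. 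Here I would induct on $|G|$ using a minimal normal subgroup $N$---elementary abelian because $G$ is solvable---together with Clifford theory: restricting $\Theta$ to $N$ and to the stabilizers of the characters of $N$ reduces the analysis to proper subquotients, while the bound $\Theta(1)\le 2$ severely limits the possible Clifford correspondents and hence the ways a negative multiplicity could arise. I expect this Clifford-theoretic induction to be the main obstacle: it is precisely the step that consumes solvability---the existence of an elementary abelian minimal normal subgroup and the control it affords---and it is the reason the analogous statement is unavailable for non-solvable extensions, as noted for the non-solvable case of \cref{thm:degree3}.
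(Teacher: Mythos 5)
The paper does not prove this lemma at all---it is quoted directly from Foote and Wales \cite{footeorder2}---so the relevant question is whether your sketch would actually constitute a proof. It would not, and you essentially say so yourself. Your setup is correct and is the standard Heilbronn--Stark framework: the virtual character $\Theta=\sum_\chi n(\chi)\chi$ is well defined, $n(\chi)=\langle\Theta,\chi\rangle$, positivity against monomial characters follows from \cref{lem:artin-L}~(a), (b) and Frobenius reciprocity, $\Theta(1)=\ord_{s=\rho}\zeta_L(s)\le 2$, and the $\Theta(1)\le 1$ case falls to the inequality $|\Theta(g)|\le\Theta(1)$ coming from effectiveness on cyclic subgroups. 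Your observation that $\SL_2(\F_3)$ is solvable with a non-monomial degree-$2$ irreducible correctly explains why monomial positivity alone cannot close the $\Theta(1)=2$ case. But everything up to that point is the easy part; the entire content of the Foote--Wales theorem is the step you defer to ``a Clifford-theoretic induction on a minimal normal subgroup,'' and that step is not a routine induction. Their argument is a lengthy minimal-counterexample analysis: one must control how $\Theta$ restricts to the inertia/stabilizer subgroups of the characters of an elementary abelian minimal normal subgroup, rule out configurations such as $\Theta=\chi_1+\chi_2+\chi_3-\chi_4$ with small norm, and handle the genuinely non-monomial degree-$2$ constituents by a detailed structural classification of the groups that can carry them. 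None of that is present, and nothing in your sketch indicates how the bound $\Theta(1)\le 2$ ``severely limits the possible Clifford correspondents'' in a way that actually excludes a negative coefficient.

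So the verdict is: correct scaffolding, genuine gap at the crux. For the purposes of this paper the honest move is the one the authors make---cite \cite{footeorder2} and do not reprove it. If you want to present an argument rather than a citation, you must either reproduce the Foote--Wales case analysis or find a genuinely new route to effectiveness of $\Theta$ when $\Theta(1)=2$; the two general facts you isolate (monomial positivity and the degree bound) are provably insufficient on their own, as your own $\SL_2(\F_3)$ example shows.
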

With Lemma~\ref{FooteWales} in mind, one replicates the proof in \cref{sec:stark} directly to show the~solvable case of \cref{thm:degree3}. Alternatively, it is also possible to treat the solvable case using methods similar to those in \cref{sec:Proof-of-Theorem-1.1}. We now give the proof.

\begin{proof}[Second proof of the solvable case of \cref{thm:degree3}]
For the sake of contradiction, we consider a solvable group $G\not\in \mathcal S_3$ with an irreducible representation of dimension at least $3$, and assume that $G$ is of minimal order subject to these constraints. Since any subgroup or quotient of a solvable group is solvable, \cref{lem:subgp-and-quotient} along with the minimality of $G$ implies that $G$ has no subgroup or quotient with an irreducible representation of dimension greater than $2$. 

As a result, $G$ must possess a normal subgroup $N$ with $G/N\cong \Z/p\Z$ for some prime $p$, and via minimality $N$ may only have irreducible representations of dimension $1$ or $2$. Such groups are explicitly classified in \cite[Theorem 3]{amitsur}; each has an abelian normal subgroup with abelian quotient, and so $N$ is monomial by \cref{lem:huppert}. This means that Artin holomorphy holds for all Artin characters of $L/L^N$.

Given a positive integer $n$, define the auxiliary meromorphic $L$-functions
$$L_n(s, L/K) \coloneqq \prod_{\dim \chi = n} L(s, \chi, L/K).$$
Using Clifford theory, the induction $\Ind_N^G(\psi)$ of any irreducible character $\psi$ of $N$ splits into irreducible characters of $\chi$ of equal degree (either $\dim\psi$ or $p\dim\psi$), so one can write $L_n(s, L/K)^n$ as a product of Artin $L$-functions whose characters are induced from $N$. Hence, $L_n(s, L/K)^n$ is holomorphic away from $s=1$ for each positive integer $n$; since $L_n(s, L/K)$ is meromorphic, this implies that $L_n(s, L/K)$ is holomorphic away from $s=1$. Moreover, $L_n(s, L/K)$ is itself an Artin $L$-function, and hence has infinitely many nontrivial zeros. Observing that $\zeta_L(s) = \prod_n L_n(s, L/K)^n$, it then follows that $\zeta_L(s)$ has infinitely many zeros with multiplicity at least 3.
\end{proof}

Hence, only the non-solvable case remains. In this context, we shall see that minimal~counterexamples would be minimal simple groups.

\begin{definition}\label{def:minimal-simple}
A \emph{minimal simple group} is a nonabelian finite simple group such that all proper subgroups are solvable.
\end{definition}

The classification of minimal simple groups was completed by Thompson. In what follows, $\PSL_n(q)$ denotes the projective special linear group of degree $n$ over the field $\F_q$ and $\Sz(2^{2k+1})$ denotes a Suzuki group.

\begin{lemma}\textup{(Thompson \cite[Corollary 1]{classification-of-minimal-simple-groups})}\label{lem:classification-of-minimal-simple-groups}
Let $G$ be a minimal simple group. Then $G$ is isomorphic to one of the following:

\begin{enumerate}[before=\normalfont]
    \item $\PSL_2(2^p)$ for some prime $p$.
    \item $\PSL_2(3^p)$ for some odd prime $p$.
    \item $\PSL_2(p)$ for some $p > 3$ prime where $p \equiv 2, 3 \mod 5$.
    \item $\PSL_3(3)$.
    \item $\Sz(2^p)$  for some odd prime $p$.
\end{enumerate}
\end{lemma}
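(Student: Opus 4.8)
The statement is precisely Thompson's classification of minimal simple groups, so the honest plan is to \emph{invoke} it rather than reprove it: this is one of the deepest results in finite group theory, and its proof occupies Thompson's celebrated series of papers on $N$-groups. I would attribute the lemma to \cite{classification-of-minimal-simple-groups} and record here only the shape of the argument that produces the five families, since reproducing the full derivation is well outside the scope of this paper.

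The key observation linking the statement to Thompson's framework is that a minimal simple group is an \emph{$N$-group}, meaning a group all of whose local subgroups (normalizers of nontrivial $p$-subgroups) are solvable. Indeed, if $G$ is minimal simple and $P$ is a nontrivial $p$-subgroup, then $N_G(P) = G$ would force $P \lhd G$ and contradict simplicity, so $N_G(P)$ is a proper subgroup and hence solvable by definition. Thompson's strategy is then a long case analysis organized around the prime $2$: the odd-order case is eliminated by the Feit--Thompson theorem, and for even order one studies the structure of a Sylow $2$-subgroup $S$ and its normalizer, splitting according to whether $S$ is abelian, dihedral, and so on. Fusion and local analysis, combined with Brauer--Suzuki-type character theory, pin down each admissible local configuration, and each configuration is matched against a known simple group. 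This yields exactly the families $\PSL_2(2^p)$, $\PSL_2(3^p)$, $\PSL_2(p)$ with $p \equiv 2,3 \pmod 5$, $\PSL_3(3)$, and $\Sz(2^p)$.

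The main obstacle is, unavoidably, the sheer depth of the classification: there is no short self-contained derivation, and the verification that the list is exhaustive requires the full machinery of $N$-group theory. What is actually needed downstream, however, is only the explicit \emph{list} itself. With it in hand, the non-solvable case of \cref{thm:degree3} reduces to a finite, family-by-family examination of these five groups, using their known character tables together with \cref{lem:subgp-and-quotient} to confirm that each minimal simple group lies in $\mathcal{S}_3$.
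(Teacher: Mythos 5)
Your proposal matches the paper exactly: the paper states this lemma as an external result of Thompson and gives no proof, only the citation, which is precisely what you recommend. Your added sketch of why a minimal simple group is an $N$-group and of Thompson's strategy is correct but supplementary; the paper relies solely on the explicit list, as you note.
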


Our immediate goal is to show that all such groups belong to $\mathcal{S}_3$. For this, we will require the following standard facts concerning the groups presented in \cref{lem:classification-of-minimal-simple-groups}. Here, $\AGL_1(q)$ signifies the affine group over $\F_q$ as defined in \eqref{eq:aff-gp-def}, and $\AGL'_1(q)$ denotes the subgroup of $\AGL_1(q)$ formed by restricting the entry $a$ in \eqref{eq:aff-gp-def} to those elements of $\F_q^\times$ that are squares.

\begin{lemma}\label{lem:group-theoretic-lemmas}
The following statements hold:
\begin{enumerate}[before=\normalfont]
    \item $\PSL_2(3) \cong \AGL_1(4)$.
    \item $\Sz(2) \cong \AGL_1(5)$.
    \item $\AGL_1(2^n) \leq \PSL_2(2^n)$ for any $n$.
    \item $\AGL_1'(q) \leq \PSL_2(q)$ for $q = p^n$ odd. 
\end{enumerate}

\begin{proof}
Parts (1) and (2) are routine calculations. For parts (3) and (4), consider the subgroup of upper-triangular matrices ${\SUT_2(q) \leq \SL_2(q)}$ and its image $\PSUT_2(q) \leq \PSL_2(q)$. Observe that
$$
\PSUT_2(q) = \left\{\pm a^{-1}\begin{pmatrix}
a^2 & b \\
0 & 1
\end{pmatrix} \bigg\vert a \in \F_q^\times, b \in \F_q \right\}
$$
Noting the similarities in the definitions of $\PSUT_2(q)$ and $\AGL_1(q)$, we see that $\PSUT_2(q) \cong \AGL'_1(q)$. When $q = 2^n$, the map $a\mapsto a^2$ is the Frobenius automorphism on $\F_q^\times$, and we may further confirm that ${\PSUT_2(q) \cong \AGL_1(q)}$.
\end{proof}
\end{lemma}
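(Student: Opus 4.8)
The plan is to dispatch the two isomorphisms (1)--(2) by identifying each side with a familiar small group, and to handle the two embeddings (3)--(4) simultaneously through a single computation inside the Borel (upper-triangular) subgroup of $\SL_2(q)$.

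For part (1), I would observe that $\PSL_2(3)$ and $\AGL_1(4)$ both have order $12$ and are each isomorphic to $A_4$: the action of $\PSL_2(3)$ on the four points of $\mathbb P^1(\F_3)$ gives a faithful map into $S_4$ whose image is the unique index-two subgroup $A_4$, while $\AGL_1(4)\cong(\F_4,+)\rtimes\F_4^\times\cong(\Z/2\Z)^2\rtimes\Z/3\Z$ with $\F_4^\times\cong\Z/3\Z$ acting without nonzero fixed points, which is exactly the standard semidirect-product description of $A_4$. For part (2), I would recall that $\Sz(2)$ has order $q^2(q^2+1)(q-1)=20$ at $q=2$ and is the Frobenius group $\Z/5\Z\rtimes\Z/4\Z$ with its complement acting faithfully; since $\AGL_1(5)=(\F_5,+)\rtimes\F_5^\times$ is the same Frobenius group of order $20$, the two coincide. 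Both of these reduce to matching orders together with the semidirect-product structure, so they are genuinely routine finite verifications.

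For parts (3) and (4), I would work with $\SUT_2(q)=\{\left(\begin{smallmatrix} a & b \\ 0 & a^{-1}\end{smallmatrix}\right):a\in\F_q^\times,\ b\in\F_q\}\leq\SL_2(q)$ and its image $\PSUT_2(q)\leq\PSL_2(q)$, and build an explicit map $\PSUT_2(q)\to\AGL_1(q)$ sending the class of $\left(\begin{smallmatrix} a & b \\ 0 & a^{-1}\end{smallmatrix}\right)$ to $\left(\begin{smallmatrix} a^2 & ab \\ 0 & 1\end{smallmatrix}\right)$. I would then check in turn that it is well defined modulo $\pm I$ (the two representatives $\pm M$ have the same image), that it is a homomorphism (a short product computation), and that its kernel is trivial in $\PSL_2$; identifying the image then shows $\PSUT_2(q)\cong\AGL_1'(q)$, since the top-left entry $a^2$ ranges exactly over the squares of $\F_q^\times$. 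This already gives (4) for odd $q$. For (3) I would note that when $q=2^n$ the squaring map is the Frobenius automorphism, hence a bijection of the odd-order group $\F_q^\times$, so every element is a square and $\AGL_1'(q)=\AGL_1(q)$, yielding $\AGL_1(q)\cong\PSUT_2(q)\leq\PSL_2(q)$.

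The only genuinely delicate point is the bookkeeping in the passage to $\PSL_2$: one must rescale the determinant-one representative $\left(\begin{smallmatrix} a & b \\ 0 & a^{-1}\end{smallmatrix}\right)$ to normalize the lower-right entry to $1$, and it is precisely this rescaling that forces the top-left entry to become $a^2$, so that the image is the square-restricted group $\AGL_1'(q)$ rather than all of $\AGL_1(q)$. Keeping track of the $\pm I$ quotient while doing this—so that the map is simultaneously well defined and injective—is where the care lies; everything else is a direct calculation.
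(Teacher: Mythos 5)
Your proposal is correct and follows essentially the same route as the paper: both identify $\AGL_1'(q)$ (resp.\ $\AGL_1(q)$ in characteristic $2$) with the image $\PSUT_2(q)$ of the upper-triangular subgroup of $\SL_2(q)$, with the rescaling of the determinant-one representative producing the square $a^2$ in the top-left entry, and both invoke the bijectivity of squaring on $\F_{2^n}^\times$ to upgrade $\AGL_1'$ to $\AGL_1$ in part (3). You simply make explicit the well-definedness, homomorphism, and injectivity checks that the paper leaves as ``noting the similarities,'' and you flesh out the routine identifications $\PSL_2(3)\cong\AGL_1(4)\cong A_4$ and $\Sz(2)\cong\AGL_1(5)\cong\Z/5\Z\rtimes\Z/4\Z$ that the paper omits.
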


Browkin \cite{browkin-2013} establishes that $\AGL_1(q) \in \mathcal{S}_{q - 1}$. We extend this result to the index two subgroup $\AGL'_1(q)$.

\begin{lemma}\label{lem:semidirect-computation}
Let $q = p^n$ for $p \geq 3$. Then $\AGL'_1(q) \in \mathcal{S}_{(q - 1)/2}$.

\begin{proof}
Observe that $[\AGL_1(q): \AGL'_1(q)] = 2$. In particular, this means $\AGL'_1(q)$ is normal in $\AGL_1(q)$. By computations of Browkin \cite{browkin-2013}, $\AGL_1(q)$ is monomial with an irreducible representation $\chi$ of degree $q - 1$. $\AGL'_1(q)$ is also monomial by \cref{lem:huppert}, and Clifford~theory gives that $\chi$ restricts to a representation of $\AGL'_1(q)$ whose constituent irreducible representations have degree at least $(q - 1)/2$. Hence, $\AGL'_1(q) \in \mathcal{S}_{(q-1)/2}$.
\end{proof}
\end{lemma}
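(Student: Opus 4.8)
The strategy is to reproduce the mechanism of \cref{lem:monomial-in-s}: I would verify that $\AGL'_1(q)$ is monomial, so that \cref{cor:hol-artin-induct} renders every Artin $L$-function of a nontrivial irreducible character entire, and then separately exhibit an irreducible character of $\AGL'_1(q)$ of degree at least $(q-1)/2$. The factorization in \cref{cor:dedekind-zeta-factorization} would then force the infinitely many nontrivial zeros of that $L$-function to appear in $\zeta_L(s)$ with multiplicity at least $(q-1)/2$. Note that one cannot simply invoke $\AGL_1(q)\in\mathcal{S}_{q-1}$, since \cref{lem:subgp-and-quotient}~(1) transports membership from a subgroup \emph{up} to the ambient group rather than down to a subgroup; so the real content lies in analyzing $\AGL'_1(q)$ on its own terms.

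For monomiality I would exploit the semidirect structure. Since $q$ is odd, the squares form an index-two cyclic subgroup of $\F_q^\times$ of order $(q-1)/2$, and $\AGL'_1(q)$ is the semidirect product of the translation subgroup $N=\{(1,b):b\in\F_q\}\cong(\F_q,+)$ by this cyclic group. Here $N$ is an elementary abelian $p$-group, hence normal, solvable, and with abelian Sylow subgroups, while the quotient is cyclic and therefore supersolvable. Huppert's criterion (\cref{lem:huppert}) then yields that $\AGL'_1(q)$ is monomial. The degenerate case $q=3$, in which the cyclic factor is trivial and the target bound $(q-1)/2=1$ is vacuous, may simply be discarded at the outset.

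To produce a large irreducible character I would restrict Browkin's degree-$(q-1)$ character $\chi$ of $\AGL_1(q)$ to the index-two normal subgroup $\AGL'_1(q)$. By Clifford's theorem in the prime-index case, the restriction of an irreducible character to a normal subgroup of index two is either irreducible or a sum of two distinct irreducibles of equal degree; applied to $\chi$, this gives an irreducible constituent of degree $q-1$ or $(q-1)/2$, in either case at least $(q-1)/2$. Running the argument of \cref{lem:monomial-in-s} with this constituent completes the proof. The one point demanding care is precisely this degree bound: a priori the restriction to a normal subgroup can fragment into many Galois-conjugate constituents, and it is the hypothesis that the index equals the prime $2$ that caps the orbit at size two and thereby keeps every constituent of degree at least $(q-1)/2$. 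This is the main obstacle, and it is resolved cleanly by the index-two form of Clifford's theorem rather than by any explicit character computation.
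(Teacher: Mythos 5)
Your proposal is correct and follows essentially the same route as the paper: establish monomiality of $\AGL'_1(q)$ via Huppert's criterion, restrict Browkin's degree-$(q-1)$ character of $\AGL_1(q)$ to the index-two normal subgroup $\AGL'_1(q)$, and use Clifford theory to extract an irreducible constituent of degree at least $(q-1)/2$, after which the mechanism of \cref{lem:monomial-in-s} applies. You simply spell out details the paper leaves implicit (the semidirect-product structure feeding into \cref{lem:huppert}, and the index-two form of Clifford's theorem).
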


Note that $\mathcal{S}_n \subseteq \mathcal{S}_3$ for $n \geq 3$. Using these results, we show that all minimal simple groups belong to $\mathcal{S}_3$:

\begin{lemma}\label{lem:minimal-simple-case}
Let $G$ be a minimal simple group. Then $G \in \mathcal{S}_3$.

\begin{proof}
By \cref{lem:classification-of-minimal-simple-groups}, there are five cases to prove. In each case, by \cref{lem:subgp-and-quotient}, it suffices to find a subgroup belonging to $\mathcal{S}_3$. Such subgroups are given in \cref{lem:group-theoretic-lemmas}. In particular,
\begin{enumerate}
    \item $\AGL_1(2^p) \leq \PSL_2(2^p)$ where $p$ prime.
    \item $\AGL'_1(3^p) \leq \PSL_2(3^p)$ where $p$ is odd and prime.
    \item $\AGL'_1(p) \leq \PSL_2(p)$ where $p \geq 7$.
    \item $\AGL_1(4) \cong \PSL_2(3) \leq \PSL_3(3)$.
    \item $\AGL_1(5) \cong \Sz(2) \leq \Sz(2^p)$.
\end{enumerate}
From the work of Browkin \cite{browkin-2013}, one has $\AGL_1(q) \in \mathcal{S}_3$ when $q \geq 4$. By \cref{lem:semidirect-computation}, it follows that $\AGL_1'(q) \in \mathcal{S}_3$ for $q \geq 7$. In all cases, we may conclude that $G \in \mathcal{S}_3$.
\end{proof}
\end{lemma}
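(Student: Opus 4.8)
The plan is to dispose of each of the five families in Thompson's classification (Lemma~\ref{lem:classification-of-minimal-simple-groups}) by exhibiting a proper subgroup already known to lie in $\mathcal{S}_3$, then invoking part~(1) of Lemma~\ref{lem:subgp-and-quotient}. The subgroup embeddings are supplied by Lemma~\ref{lem:group-theoretic-lemmas}, so the real content is checking that each such subgroup genuinely belongs to $\mathcal{S}_3$. First I would record the two membership facts that drive everything: from Browkin's computation one has $\AGL_1(q) \in \mathcal{S}_{q-1}$, and since $\mathcal{S}_n \subseteq \mathcal{S}_3$ whenever $n \geq 3$, this yields $\AGL_1(q) \in \mathcal{S}_3$ as soon as $q - 1 \geq 3$, i.e.\ $q \geq 4$. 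Likewise Lemma~\ref{lem:semidirect-computation} gives $\AGL'_1(q) \in \mathcal{S}_{(q-1)/2}$, hence $\AGL'_1(q) \in \mathcal{S}_3$ once $(q-1)/2 \geq 3$, i.e.\ $q \geq 7$.

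Next I would go through the five cases, matching each to a subgroup covered by the above. For $\PSL_2(2^p)$ with $p$ prime, part~(3) of Lemma~\ref{lem:group-theoretic-lemmas} gives $\AGL_1(2^p) \leq \PSL_2(2^p)$, and $2^p \geq 4$ for every prime $p$, so $\AGL_1(2^p) \in \mathcal{S}_3$. For $\PSL_2(3^p)$ with $p$ an odd prime, part~(4) gives $\AGL'_1(3^p) \leq \PSL_2(3^p)$, and $3^p \geq 27 \geq 7$. For $\PSL_2(p)$ with $p > 3$ prime and $p \equiv 2,3 \pmod 5$, part~(4) gives $\AGL'_1(p) \leq \PSL_2(p)$; here the smallest admissible prime is $p = 7$, so $p \geq 7$ throughout and $\AGL'_1(p) \in \mathcal{S}_3$. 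For $\PSL_3(3)$, part~(1) identifies $\AGL_1(4) \cong \PSL_2(3)$, which sits inside $\PSL_3(3)$, and $\AGL_1(4) \in \mathcal{S}_3$ since $4 \geq 4$. For the Suzuki groups $\Sz(2^p)$, part~(2) identifies $\AGL_1(5) \cong \Sz(2)$, which embeds in $\Sz(2^p)$, and $\AGL_1(5) \in \mathcal{S}_3$ since $5 \geq 4$.

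The main thing to verify carefully is the arithmetic matching the threshold in each family to the numerical constraints Thompson's list imposes — in particular that the exceptional small cases ($\PSL_2(3) \cong \AGL_1(4)$ and $\Sz(2) \cong \AGL_1(5)$) are exactly those with $q = 4$ and $q = 5$, which still clear the $q \geq 4$ bar for the full affine group, and that the half-size bound $(q-1)/2 \geq 3$ is met in the odd-characteristic $\PSL_2$ cases precisely because the relevant $q$ is always at least $7$. No case requires the tighter index-two subgroup when $q < 7$, since those cases use the full $\AGL_1(q)$ rather than $\AGL'_1(q)$. Once these bookkeeping checks are in place, part~(1) of Lemma~\ref{lem:subgp-and-quotient} closes each case and the lemma follows.
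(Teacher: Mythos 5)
Your proposal is correct and follows essentially the same route as the paper: classify via Thompson, embed $\AGL_1(q)$ or $\AGL'_1(q)$ in each case, and invoke Browkin's $\AGL_1(q)\in\mathcal{S}_{q-1}$ together with Lemma~\ref{lem:semidirect-computation} and Lemma~\ref{lem:subgp-and-quotient}(1). The only difference is that you spell out the arithmetic threshold checks (e.g.\ that the smallest admissible prime in the $\PSL_2(p)$ case is $7$) more explicitly than the paper does, which is a harmless elaboration.
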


We are now ready to complete the proof of \cref{thm:degree3}. Since we have already shown the result in the case where $\Gal(L/K)$ is solvable, we need only treat the non-solvable case.

\begin{proof}[Proof of \cref{thm:degree3}]
Suppose for the sake of contradiction that there is a finite non-solvable group not in $\mathcal S_3$; let $G$ be such a group of minimal order.

By \cref{lem:subgp-and-quotient} and the minimality of $G$, it follows that all proper subgroups and nontrivial quotients of $G$ are solvable. We initially wish to show that $G$ is simple. Suppose for the sake of contradiction that $G$ is not simple. Then $G$ has a maximal proper nontrivial normal subgroup $N$, so that $G/N$ is simple. If $G/N$ is abelian, by the solvability of $N$, $G$ is also solvable, contradicting our initial assumptions on $G$. If $G/N$ is nonabelian, then $G/N$ is non-solvable, contradicting the fact that $G$ only has solvable nontrivial quotients. Thus, no such $N$ exists, meaning $G$ is simple.

Since all proper subgroups of $G$ are solvable, $G$ is a minimal simple group, which by \cref{lem:minimal-simple-case} means $G \in \mathcal{S}_3$ as desired.
\end{proof}

\section{Conclusion and conjectures}\label{conclusion-and-conjectures}
Theorems \ref{thm:mainthm} and \ref{thm:degree3} show that, up through order $3$, the predictions of the Artin holomorphy conjecture on the orders of zeros of $\zeta_L(s)$ for Galois $L/K$ hold unconditionally. However, Artin holomorphy implies results that are both broader (which apply to non-Galois extensions) and stronger (which guarantee even larger multiplicities). To this end, we present two conjectures as possible extensions of our work.

\begin{conjecture} Let $L/K$ be an extension of number fields, $M$ be the Galois closure of $L/K$ and write $H=\Gal(L/K)\subset G=\Gal(M/K)$. If $\Ind_H^G(1_H)$ contains in its decomposition an irreducible representation of $G$ with nontrivial multiplicity, then $\zeta_L(s)$ has infinitely many nontrivial zeros of multiplicity greater than $1$. 
\end{conjecture}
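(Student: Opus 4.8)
The plan is to run the Stark-theoretic argument of \cref{sec:stark} over the Galois closure $M$ and then attempt to descend the resulting multiplicities from $\zeta_M(s)$ to $\zeta_L(s)$. Writing $H=\Gal(M/L)$, so that $L=M^H$, \cref{lem:artin-L}(b),(c) give the factorization $\zeta_L(s)=L(s,\Ind_H^G 1_H,M/K)=\zeta_K(s)\prod_{\chi\neq 1}L(s,\chi,M/K)^{m_\chi}$, where $m_\chi=\langle \Ind_H^G 1_H,\chi\rangle=\langle 1_H,\chi|_H\rangle$ by Frobenius reciprocity and the product runs over nontrivial irreducibles of $G$. The hypothesis furnishes a nontrivial irreducible $\chi_0$ with $m_{\chi_0}\geq 2$; note this forces $\chi_0(1)\geq m_{\chi_0}\geq 2$. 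Since $L(s,\chi_0,M/K)$ has infinitely many zeros or poles $\rho$ in the critical strip, it suffices to produce, for infinitely many such $\rho$, the lower bound $\ord_{s=\rho}\zeta_L(s)=\sum_{\chi}m_\chi\,\ord_{s=\rho}L(s,\chi,M/K)\geq 2$.

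The key device is the local order character $\theta_\rho\coloneqq\sum_\chi \big(\ord_{s=\rho}L(s,\chi,M/K)\big)\chi$. Whenever $\ord_{s=\rho}\zeta_M(s)\leq 1$, \cref{thm:stark} shows $\theta_\rho$ is a genuine character, so every $\ord_{s=\rho}L(s,\chi,M/K)\geq 0$; since effectiveness rules out a pole, $\rho$ is a zero of $L(s,\chi_0,M/K)$ and one would read off $\ord_{s=\rho}\zeta_L(s)\geq m_{\chi_0}\ord_{s=\rho}L(s,\chi_0,M/K)\geq 2$, exactly as in the first proof of \cref{thm:mainthm}. The difficulty is that this favourable regime is empty at the zeros we care about: if $\theta_\rho$ were effective and $\rho$ a zero of $L(s,\chi_0,M/K)$, then $\ord_{s=\rho}\zeta_M(s)=\sum_\chi \chi(1)\ord_{s=\rho}L(s,\chi,M/K)\geq \chi_0(1)\geq 2$, contradicting $\ord_{s=\rho}\zeta_M(s)\leq 1$. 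Thus every zero and pole of $L(s,\chi_0,M/K)$ forces $\ord_{s=\rho}\zeta_M(s)\geq 2$, and \cref{thm:stark} supplies no information there.

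Consequently the argument, as it stands, certifies only that $\zeta_M(s)$ has infinitely many multiple zeros, while the tower relation $\ord_{s=\rho}\zeta_L(s)\leq \ord_{s=\rho}\zeta_M(s)$ coming from the holomorphy of $\zeta_M(s)/\zeta_L(s)$ (\cref{cor:hol-quotient} applied to $M/L$) runs in the wrong direction. I expect this \emph{descent}---certifying multiplicity for the intermediate, non-Galois field $L$ at points where $\zeta_M(s)$ vanishes to high order and Stark offers no positivity---to be the main obstacle. What one really wants is a refinement of \cref{thm:stark} controlling the $H$-fixed dimension $\dim\theta_\rho^{H}=\ord_{s=\rho}\zeta_L(s)$ rather than merely the effectiveness of $\theta_\rho$; equivalently, a representation-theoretic input showing that the constraints $\langle\theta_\rho,\Ind_{H'}^G 1_{H'}\rangle\geq 0$, valid for every subgroup $H'$ as the orders of the Dedekind zeta functions $\zeta_{M^{H'}}(s)$, together with $\langle\theta_\rho,\chi_0\rangle\geq 1$ and $m_{\chi_0}\geq 2$, force $\langle\theta_\rho,\Ind_H^G 1_H\rangle\geq 2$. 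I do not expect these constraints alone to suffice, which is precisely why the statement is only conjectural.

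Failing a direct Stark refinement, I would instead adapt the minimal-counterexample machinery of \cref{sec:Proof-of-Theorem-1.1} to pairs $(G,H)$. Two reductions are available: since $M$ is the Galois closure, $H$ is automatically core-free, and for any intermediate subgroup $H\leq H'\leq G$ one may replace $K$ by $M^{H'}$, leaving $\zeta_L(s)$ unchanged and reducing to a configuration over the smaller group $H'$ whenever $\Ind_H^{H'}1_H$ still contains an $H'$-irreducible of multiplicity at least $2$ (the analogue of \cref{lem:subgp-and-quotient}). One would then take a minimal counterexample $(G,H)$ and hope to force $G$ into a range where Artin holomorphy for the offending $\chi_0$ is known---via \cref{lem:huppert} in the monomial case or \cref{FooteWales} in the solvable case---so that $L(s,\chi_0,M/K)$ is genuinely entire and its infinitely many zeros become zeros of $\zeta_L(s)$ of order at least $m_{\chi_0}\geq 2$. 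The obstruction reappears here group-theoretically: the reduction need not preserve the multiplicity hypothesis, and a minimal configuration could well be a non-solvable $G$ for which no such holomorphy is presently available, mirroring the difficulty already flagged for \cref{thm:degree3} in the non-solvable case.
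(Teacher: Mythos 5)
This statement is Conjecture 6.1 of the paper: it appears in the concluding section precisely because the authors do not prove it, and the paper offers no argument beyond noting that \cref{thm:mainthm} settles some special cases (e.g.\ when there is an intermediate field $K\subset K'\subset L$ with $L/K'$ nonabelian Galois) and that the maximal-subgroup case looks genuinely hard. Your submission correctly recognizes this: it is not a proof, and does not claim to be one, but rather a diagnosis of why the available tools fail. That diagnosis is sound. The factorization $\zeta_L(s)=\prod_\chi L(s,\chi,M/K)^{m_\chi}$ with $m_\chi=\langle 1_H,\chi|_H\rangle$ is correct (with $H=\Gal(M/L)$, as you rightly reinterpret the paper's slightly abusive notation $H=\Gal(L/K)$), and your observation that Stark's theorem is vacuous exactly at the points of interest --- since any zero of $L(s,\chi_0,M/K)$ with $\theta_\rho$ effective already forces $\ord_{s=\rho}\zeta_M(s)\geq\chi_0(1)\geq 2$ --- is the precise reason the first proof of \cref{thm:mainthm} certifies multiple zeros only of $\zeta_M(s)$, from which nothing descends to $\zeta_L(s)$ because $\zeta_M(s)/\zeta_L(s)$ being holomorphic bounds $\ord_{s=\rho}\zeta_L(s)$ from above, not below. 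Your reformulation of what is needed --- positivity constraints $\langle\theta_\rho,\Ind_{H'}^G 1_{H'}\rangle\geq 0$ for all $H'$ plus $\langle\theta_\rho,\chi_0\rangle\geq 1$ forcing $\langle\theta_\rho,\Ind_H^G 1_H\rangle\geq 2$ --- is a reasonable sharpening of the problem, and your skepticism that these constraints suffice matches the authors' own assessment that the case where $H$ is maximal in $G$ resists their methods because there is no smaller extension to pass to.

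Since no proof exists in the paper to compare against, the only substantive comment is that your proposed fallback (a minimal-counterexample induction on pairs $(G,H)$) would need, at minimum, an analogue of \cref{lem:subgp-and-quotient} that preserves the hypothesis ``$\Ind_H^G 1_H$ has an irreducible constituent of multiplicity $\geq 2$'' under passage to subgroups and quotients; as you note, this hypothesis is not obviously hereditary, which is an additional obstruction beyond the non-solvable holomorphy issue. Your submission should be read as a correct account of why the statement remains conjectural, not as a gap in a claimed proof.
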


\cref{thm:mainthm} implies this conjecture in some non-obvious cases, e.g.~whenever there~is some $K\subset K'\subset L$ with $L/K'$ a nonabelian Galois extension. Nevertheless, there are cases in~which $\Ind_H^G(1_H)$ may have irreducible components with nontrivial multiplicity even if $H$ is a maximal subgroup of $G$, i.e.~if there are no fields between $K$ and $L$. These cases should be~specifically difficult to treat using methods similar to ours, since there is no obvious way to replace the extension by a smaller one. 

\begin{conjecture} \label{conj:higher-mult} Let $L/K$ is a Galois extension of number fields and $G=\operatorname{Gal}(L/K)$. If $G$ has an irreducible representation of degree $m$, then $\zeta_L(s)$ has infinitely many nontrivial zeros of multiplicity at least $m$.
\end{conjecture}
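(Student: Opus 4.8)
The plan is to run the minimal-counterexample machinery of \cref{sec:Proof-of-Theorem-1.1} and \cref{sec:degree-3-proof}, but now against the \emph{moving} target $d(G)$, where $d(G)$ denotes the largest degree of an irreducible character of $G$. Since $\mathcal{S}_n \subseteq \mathcal{S}_{n'}$ whenever $n \geq n'$, the conjecture for a fixed extension is equivalent to the single statement $G \in \mathcal{S}_{d(G)}$, so it suffices to prove $G \in \mathcal{S}_{d(G)}$ for every finite group $G$. Suppose not, and let $G$ be a counterexample of minimal order, with $m \coloneqq d(G)$. By \cref{lem:subgp-and-quotient} and minimality, every proper subgroup $H$ and every proper quotient $G/N$ must satisfy $d(H) < m$ and $d(G/N) < m$; otherwise such a section would lie in $\mathcal{S}_{d(\cdot)} \subseteq \mathcal{S}_m$ and force $G \in \mathcal{S}_m$. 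Thus a minimal counterexample is a group whose maximal character degree strictly dominates that of all of its proper sections.

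First I would dispose of the monomial case, generalizing \cref{lem:monomial-in-s}: if $G$ is monomial, its degree-$m$ character $\chi$ is induced from a one-dimensional character $\psi$ of some $H \leq G$, so $L(s,\chi,L/K) = L(s,\psi,L/L^H)$ is an entire Hecke $L$-function by \cref{lem:artin-L}(a) and (b), with infinitely many nontrivial zeros; by \cref{cor:dedekind-zeta-factorization} these occur in $\zeta_L(s)$ with multiplicity $\dim\chi = m$, giving $G \in \mathcal{S}_m$. Hence a minimal counterexample is non-monomial.

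Next I would attempt to break up the top character by Clifford theory. If $G$ possesses a nontrivial proper normal subgroup $N$ for which $\chi$ restricts to $N$ with a non-$G$-invariant constituent $\theta$, then $\chi = \Ind_I^G\psi$ is induced from the proper stabilizer $I = I_G(\theta)$, and \cref{lem:artin-L}(b) identifies $L(s,\chi,L/K)$ with the Artin $L$-function of $\psi$ over $L/L^I$; the hope is to leverage the minimality hypothesis on $I$ to control the holomorphy of this lower-degree $L$-function. The difficult configurations are the \emph{primitive} ones: simple or quasisimple $G$ for which the top representation is induced from no proper subgroup and $G$ is non-monomial. By a normal-subgroup argument as in the proof of \cref{thm:degree3}, a minimal counterexample should reduce to a simple group, at which point the classification of finite simple groups would in principle furnish a finite list to examine.

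The hard part — and the reason the conjecture remains open — is that for such primitive non-monomial groups there is no proper section carrying an irreducible character of degree $\geq m$ to borrow from, in contrast to the $m = 2,3$ cases where a monomial subgroup such as $\AGL_1(q)$ supplied the needed multiplicity against a \emph{fixed} small target. Producing multiplicity-$m$ zeros of $\zeta_L(s)$ then genuinely requires the holomorphy of the single degree-$m$ Artin $L$-function $L(s,\chi,L/K)$ itself. Brauer induction, as in \cref{lem:aramata-brauer}, only yields meromorphy with possibly negative coefficients and so does not suffice; and the local alternative — an order-$(m-1)$ analogue of Stark's \cref{thm:stark} and the Foote--Wales \cref{FooteWales}, asserting that a zero of $\zeta_L(s)$ of order at most $m-1$ forces every Artin $L$-function to be holomorphic there — is presently known only through order $2$ and only for solvable extensions. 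Supplying either input is tantamount to establishing new cases of the Artin holomorphy conjecture, which is the essential obstacle.
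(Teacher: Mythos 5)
The statement you were asked to prove is \cref{conj:higher-mult}, which the paper itself leaves open: there is no proof of it anywhere in the paper, only the closing remark that it is known for monomial $G$ and that ``more work is necessary'' beyond that. Your proposal is, accordingly, not a proof, and to your credit you say so explicitly in the final paragraph. The parts you do carry out are correct and align with the paper's machinery: the reduction of the conjecture to the single assertion $G\in\mathcal{S}_{d(G)}$ (using $\mathcal{S}_n\subseteq\mathcal{S}_{n'}$ for $n\geq n'$, together with the fact that character degrees of subgroups and quotients never exceed $d(G)$); the monomial case via \cref{lem:artin-L}, \cref{cor:hol-artin-induct}, and \cref{cor:dedekind-zeta-factorization}, which is exactly the generalization of \cref{lem:monomial-in-s} that the paper alludes to; and the observation that a minimal counterexample must have $d(H)<d(G)$ and $d(G/N)<d(G)$ for every proper subgroup and quotient, by \cref{lem:subgp-and-quotient}.

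The genuine gap is the one you name, but let me sharpen where your intermediate Clifford-theoretic step also fails to deliver anything. If $\chi=\Ind_I^G\psi$ with $I$ proper, minimality only gives $I\in\mathcal{S}_{d(I)}$ with $d(I)\leq m-1$; that statement produces zeros of $\zeta_L(s)$ of multiplicity at most $m-1$ and carries no information whatsoever about the holomorphy of the particular $L$-function $L(s,\psi,L/L^{I})$ whose zeros you would need to count with multiplicity $m$ via \cref{cor:dedekind-zeta-factorization}. Unless $\psi$ is one-dimensional --- in which case $\chi$ is monomial and you are back in the already-settled case --- the imprimitive non-monomial configuration is just as open as the primitive one. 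For the primitive case you correctly identify that one needs either Artin holomorphy for $\chi$ itself or an order-$(m-1)$ analogue of \cref{thm:stark} and \cref{FooteWales}, neither of which exists for $m\geq 4$ (and the latter only for solvable extensions even at order $2$). In short: your write-up is an accurate reduction-and-obstruction analysis that reproduces the paper's own assessment of why the conjecture is hard, but it should not be presented as a proof, because it is not one.
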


This is known when $G$ is a monomial group, in which case the necessary special cases of the Artin holomorphy conjecture hold unconditionally. On the other hand, both Theorems \ref{thm:mainthm} and \ref{thm:degree3} use particular information about groups with representations of small dimension. To generalize this method to larger $m$, more work is necessary in studying groups with representations of bounded dimension.

\bibliographystyle{alpha}
\bibliography{HigherOrderZeros}

\end{document}